\numberwithin{equation}{section}
\theoremstyle{plain}
\newtheorem{theorem}{Theorem}[section]
\newtheorem{corollary}[theorem]{Corollary}
\newtheorem{conjecture}[theorem]{Conjecture}
\theoremstyle{definition}
\newtheorem{example}[theorem]{Example}
\theoremstyle{remark}
\newtheorem*{remarks}{Remarks}
\def\eps{\varepsilon}
\newenvironment{newreferences}
               {\section*{References}
                \raggedright
                \begin{list}{}{\setlength{\itemsep}{0pt}
                               \setlength{\parsep}{0pt}
                               \setlength{\labelwidth}{0pt}
                               \setlength{\leftmargin}{12pt}
                               \setlength{\labelsep}{0pt}}
                \setlength{\itemindent}{-12pt}
               }{\end{list}}
\title{Gambler's Ruin and the ICM}
\author{Persi Diaconis\footnote{Departments of Mathematics and Statistics, Stanford University.  Research supported by NSF grant DMS 1954042.} \and Stewart N. Ethier\footnote{Department of Mathematics, University of Utah. Research supported by Simons Foundation grant 429675.}}
\date{}
\begin{document}
\maketitle

\bigskip
``In the case where there are three players with limited fortunes, the various problems appear to be of
quite a different order of difficulty than in the case of two players.'' \quad Louis Bachelier (1912)
\bigskip

\begin{abstract}
Consider gambler's ruin with three players, 1, 2, and 3, having initial capitals $A$, $B$, and $C$ units.  At each round a pair of players is chosen (uniformly at random) and a fair coin flip is made resulting in the transfer of one unit between these two players.  Eventually, one of the players is eliminated and play continues with the remaining two. Let $\sigma\in S_3$ be the elimination order (e.g., $\sigma=132$ means player 1 is eliminated first and player 3 is eliminated second, leaving player 2 with $A+B+C$ units).

We seek approximations (and exact formulas) for the elimination order probabilities $P_{A,B,C}(\sigma)$.  Exact, as well as arbitrarily precise, computation of these probabilities is possible when $N:=A+B+C$ is not too large.  Linear interpolation can then give reasonable approximations for large $N$.  One frequently used approximation, the independent chip model (ICM), is shown to be inadequate.  A regression adjustment is proposed, which seems to give good approximations to the elimination order probabilities.\medskip

\noindent\textit{Keywords}: gambler's ruin problem, tower problem, linear interpolation, independent chip model (ICM), Plackett--Luce model, linear regression.
\end{abstract}

\section{Introduction}\label{sec:intro}

As motivation, first consider gambler's ruin with two players, 1 and 2, who initially have 1 and $N-1$ units.  At each round a fair coin flip is made resulting in the transfer of one unit from one player to the other.  Eventually, one of the players goes broke.  It is a classical result that
\begin{equation*}
P_{1,N-1}(\text{player 2 goes broke})=\frac1N.  
\end{equation*}

Consider next the game with three players having initial fortunes 1, 1, $N-2$.  At each round a pair of players is chosen (uniformly at random) and a fair coin flip is made resulting in the transfer of one unit between these two players.  What is
\begin{equation*}
P_{1,1,N-2}(\text{player 3 goes broke first})\,?
\end{equation*}

This basic problem has had little study.  A first thought is, ``Consider player 3 versus $\{1,2\}$.''  This is like gambler's ruin with two players.  Perhaps
\begin{equation*}
P_{1,1,N-2}(\text{player 3 goes broke first})\approx\frac{\text{constant}}{N}.
\end{equation*}
A well-studied scheme, the independent chip model (ICM), explained in Subsection~\ref{subsec:ICM} below, suggests
\begin{equation*}
P_{1,1,N-2}(\text{player 3 goes broke first})=\frac{2}{N(N-1)}.
\end{equation*}
We prove below that both of these are off. Indeed,
\begin{equation*}
P_{1,1,N-2}(\text{player 3 goes broke first})\approx\frac{\text{constant}}{N^3}.
\end{equation*}
It does not seem easy to give a simple heuristic for the $N^3$, and for $k\ge4$ players, the correct order of decay is open.

Let the initial capitals be $A$, $B$, and $C$ units, and put $N:=A+B+C$.  Let $\sigma\in S_3$ be the elimination order (e.g., $\sigma=132$ means player 1 is eliminated first and player 3 is eliminated second, leaving player 2 with $N$ units).  Useful approximations to $P_{A,B,C}(\sigma)$ are important in widely played versions of tournament poker; if, at the final table, three players remain, the first-, second-, and third-place finishers get fixed amounts $\alpha$, $\beta$, and $\gamma$, say, not depending on $A$, $B$, and $C$.  Clearly, $P_{A,B,C}(\sigma)$ is crucial in evaluating an equitable split of the prize pool $\alpha+\beta+\gamma$, should the players decide to ``settle.''  Such calculations are also required to evaluate the results of various actions throughout the game.

\begin{example}\label{wsop-example}
In the 2019 World Series of Poker Main Event, at the time the fourth-place finisher was eliminated, the three remaining players had chip counts as shown in Table~\ref{wsop-main} (WSOP, 2019a).

\begin{table}[htb]
\caption{\label{wsop-main}The final three in the 2019 World Series of Poker Main Event.}
\catcode`@=\active \def@{\hphantom{0}}
\tabcolsep=2mm 
\begin{center}
\begin{tabular}{lccc}
\hline\noalign{\smallskip}
@@@@player & chip count & big blinds & actual payoff \\
\noalign{\smallskip}\hline
\noalign{\smallskip}
Dario Sammartino & @$67{,}600{,}000$ & @33.8 & @$\$6{,}000{,}000$ \\
Alex Livingston & $120{,}400{,}000$ & @60.2 & @$\$4{,}000{,}000$ \\
Hossein Ensan   & $326{,}800{,}000$ & 163.4 & $\$10{,}000{,}000$ \\
\noalign{\smallskip}\hline
\noalign{\smallskip}
@@@@total          & $514{,}800{,}000$ & 257.4 & \\
\noalign{\smallskip}\hline
\end{tabular}
\end{center}
\end{table}

At this stage of the tournament, the standard unit bet\,---\,the big blind\,---\,was $2{,}000{,}000$ chips.  Initial capital, in big blinds, is shown for the three players in Table~\ref{wsop-main}, but to avoid fractions we multiply these numbers by 5 to get $A=169$, $B=301$, and $C=817$.  In the ensuing competition, the elimination order turned out to be 213, leaving Hossein Ensan with all $514{,}800{,}000$ chips and the \$10 million first-place prize.  The methods developed below (see Examples~\ref{wsop-interp3} and \ref{wsop-regression}) give the  chances shown in Table~\ref{wsop-chances} for the six possible elimination orders, \textit{assuming our random walk is a reasonable model for a no-limit Texas hold'em tournament}.  Thus, the second most likely elimination order is what actually occurred.

\begin{table}[htb]
\caption{\label{wsop-chances}The approximate probabilities of the six possible elimination orders in the scenario of Table~\ref{wsop-main}, assuming chip counts (in units of $400{,}000$ chips, or $1/5$ of the big blind) equal to $A=169$, $B=301$, and $C=817$.}
\catcode`@=\active \def@{\hphantom{0}}
\tabcolsep=2mm 
\begin{center}
\begin{tabular}{cccccccc}\hline
\noalign{\smallskip}
$\sigma$ & 123 & 132 & 213 & 231 & 312 & 321  \\
\noalign{\smallskip}
\hline
\noalign{\smallskip}
$P_{A,B,C}(\sigma)$  & 0.4196 & 0.2079 & 0.2152 & 0.1062 & 0.0260 & 0.0251 \\
\noalign{\smallskip}
\hline
\end{tabular}
\end{center}
\end{table}
\end{example}

Section \ref{sec:background} contains background on gambler's ruin and the independent chip model.  We review the connections with absorbing Markov chain theory.  This allows exact computation for $N$ up to at least 200.  Another approach, Jacobi iteration, allows virtually exact computation for $N$ up to at least 300.

We also observe that the $N=300$ data can be linearly interpolated to give reasonable approximations for arbitrary $N$.  One other method of approximation, based on a Monte Carlo technique, is described. 

Recent results for ``nice'' absorbing Markov chains (see Diaconis, Houston-Edwards, and Saloff-Coste, 2021) allow crude but useful approximations of $P_{A,B,C}(\sigma)$ uniformly.  The $\text{constant}/N^3$ result is proved as a consequence of that work.  

A new approximation approach is introduced in Section~\ref{sec:regression}.  The ratio
\begin{equation*}
P_{A,B,C}^{\text{GR}}(\sigma)/P_{A,B,C}^{\text{ICM}}(\sigma)
\end{equation*}
appears to be a smooth function of $A$, $B$, and $C$.  A sixth-degree polynomial regression is fit to this ratio and seen to give good approximations to $P_{A,B,C}^{\text{GR}}(\sigma)$.  In the sequel, superscripts GR (``gambler's ruin'') and ICM (``independent chip model'') will be used only when there is a chance of confusion.  (No superscript implicitly means GR.)

Section \ref{conjecture} gives some results for the gambler's ruin problem with $k\ge4$ players as well as a conjecture, namely the \textit{scaling conjecture}
\begin{equation*}
P_{A',B',C'}(\sigma)\doteq P_{A,B,C}(\sigma)\quad\text{whenever}\quad\frac{A'}{A}=\frac{B'}{B}=\frac{C'}{C},
\end{equation*}
where $\doteq$ denotes approximate equality.  (The symbol $\approx$ has a different meaning; see Theorem~\ref{D,HE,SC-thm} below.) 
An equivalent formulation,
\begin{equation}\label{scaling-conj2}
P_{nA,nB,nC}(\sigma)\doteq P_{A,B,C}(\sigma),\qquad n\ge2,
\end{equation}
may be preferable because it is closely related to the provable result that $\lim_{n\to\infty}P_{nA,nB,nC}(\sigma)$ exists;  indeed, the limit can be expressed in terms of standard two-dimensional Brownian motion.  A conjecture that is mathematically sharper than \eqref{scaling-conj2} appears in Subsection~\ref{subsec:scaling}.

Finally, Section~\ref{sec:summary} summarizes the various methods of evaluating and approximating the elimination order probabilities.  In all, six methods of approximation are studied, including a Brownian motion approximation along with the methods mentioned above.

\section*{Acknowledgments}

We thank Laurent Saloff-Coste, Laurent Miclo, Lexing Ying, Gene Kim, Sangchul Lee, Sourav Chatterjee, Guanyang Wang, Thomas Bruss, Pat Fitzsimmons, Bruce Hajek, Denis Denisov, Steve Stigler, Bernard Bru, Mason Malmuth, and Tom Ferguson for their help.  We are particularly indebted to Chris Ferguson for emphasizing the utility and mathematical depth of the problem.

\section{Background}\label{sec:background}

This section contains background on gambler's ruin\,---\,in two and higher dimensions (three or more players).  Exact computation of the Poisson kernel (harmonic measure) using absorbing Markov chains is taken up in Subsection~\ref{subsec:exact}, and arbitrarily precise computation by Jacobi iteration is the subject of Subsection~\ref{subsec:iteration}.  The use of barycentric coordinates to linearly interpolate these exact values is taken up in Subsection~\ref{subsec:interpolation}.  Another approaches to approximate computation, Monte Carlo methods, is described in Subsection~\ref{subsec:Monte}.

The asymptotics of the Poisson kernel are treated in Subsection~\ref{subsec:analytic}, which includes a proof of $P_{1,1,N-2}(\text{player 3 goes broke first})\approx\text{constant}/N^3$.  Finally, the ICM is introduced and its relation to the Plackett--Luce model is developed in Subsection~\ref{subsec:ICM}.

\subsection{Gambler's ruin}\label{subsec:ruin}

With two players, gambler's ruin is a classical topic, well developed in Feller (1968, Chap.\ XIV) and Ethier (2010, Chap.\ 7).  Important extensions to unfair coin flips and more-general step sizes are also well developed.  See Song and Song (2013) for a historical survey.

For $k=3$ players, the subject was first studied by Bachelier (1912).   The first post-Bachelier reference we have found is a formulation in terms of Brownian motion in a triangle due to Cover (1987).  This was solved by conformally mapping the triangle to a disk and using classical results for the Poisson kernel of the disk, by Hajek (1987) and later, independently, by Ferguson (1995).  Further results for $k=3$, including the poker connection, are in Kim (2005).

Martingale theory can be used to get information about the time to absorption.  For three players, let $T_1$ be the first time one of the three players is eliminated.  Bachelier (1912, \S204), Engel (1993), and Stirzaker (1994) proved
\begin{equation}\label{ET_1}
E(T_1)=\frac{3ABC}{A+B+C}.
\end{equation}
Thus, if $A=B=C=100$, then $E(T_1)=10{,}000$.  If $A=B=1$ and $C=298$, then $E(T_1)=2.98$.  Bruss, Louchard, and Turner (2003) and Stirzaker (2006) evaluated $\text{Var}(T_1)$.  Let $T_2$ be the first time two players are eliminated.  Bachelier (1912, \S 209), Engel (1993), and Stirzaker (1994) showed that
\begin{equation}\label{ET_2}
E(T_2)=AB+AC+BC.
\end{equation}
Thus, if $A=B=C=100$, then $E(T_2)=30{,}000$.  If $A=B=1$ and $C=298$, then $E(T_2)=597$.  Actually, Bachelier and Engel used first-order linear partial difference equations, while Stirzaker used martingales.  Bachelier's (1912, \S 209) proof of \eqref{ET_2} is very much worth reading.

A standard theorem (Bachelier 1912, \S 14) is 
\begin{equation}\label{P(3 wins)}
P(\text{player 3 wins all})=\frac{C}{A+B+C}.
\end{equation}
The results \eqref{ET_2} and \eqref{P(3 wins)} generalize to $k$ players.  There is a related development in the language of the ``Towers of Hanoi'' problem (Bruss, Louchard, and Turner, 2003; Ross, 2009).  None of this literature addresses the position at the first absorption time.

\subsection{Exact computation by Markov chain methods}\label{subsec:exact}

The gambler's ruin model is an example of an absorbing Markov chain in the state space
\begin{equation*}
\mathscr{X}:=\{(x_1,x_2,x_3)\in\textbf{Z}^3:x_1,x_2,x_3\ge0,\,x_1+x_2+x_3=N\}.
\end{equation*}
The first two coordinates determine things and the state space can be pictured (when $N=6$) as in Figure~\ref{statespace}.  The classical stars and bars argument shows that
\begin{equation*}
|\mathscr{X}|=\binom{N+2}{2},
\end{equation*}
and $\mathscr{X}$ has $\binom{N-1}{2}$ interior states, $3(N-1)$ nonabsorbing boundary states, and 3 absorbing states.  The Markov chain stopped at time $T_1$ is itself a Markov chain whose transition matrix 
can be written in block form as
\begin{equation*}
\bordermatrix{& \text{boundary} & \text{interior} \cr
\text{boundary} & \bm I & \bm 0 \cr
\text{interior} & \bm S & \bm Q \cr},
\end{equation*}
and elementary arguments yield the following theorem (Kemeny and Snell, 1976, Theorem 3.3.7).

\begin{figure}[htb]
\centering
\setlength{\unitlength}{1cm}
\begin{picture}(7,7.5)
\thinlines
\put(1,1){\line(1,0){6}}
\put(1,2){\line(1,0){5}}
\put(1,3){\line(1,0){4}}
\put(1,4){\line(1,0){3}}
\put(1,5){\line(1,0){2}}
\put(1,1){\line(0,1){6}}
\put(2,1){\line(0,1){5}}
\put(3,1){\line(0,1){4}}
\put(4,1){\line(0,1){3}}
\put(5,1){\line(0,1){2}}
\put(7,1){\line(-1,1){6}}
\put(6,1){\line(-1,1){5}}
\put(5,1){\line(-1,1){4}}
\put(4,1){\line(-1,1){3}}
\put(3,1){\line(-1,1){2}}
\put(1,1){\circle*{0.275}}
\put(2,1){\circle*{0.2}}
\put(3,1){\circle*{0.2}}
\put(4,1){\circle*{0.2}}
\put(5,1){\circle*{0.2}}
\put(6,1){\circle*{0.2}}
\put(7,1){\circle*{0.275}}
\put(1,2){\circle*{0.2}}
\put(2,2){\circle{0.2}}
\put(3,2){\circle{0.2}}
\put(4,2){\circle{0.2}}
\put(5,2){\circle{0.2}}
\put(6,2){\circle*{0.2}}
\put(1,3){\circle*{0.2}}
\put(2,3){\circle{0.2}}
\put(3,3){\circle{0.2}}
\put(4,3){\circle{0.2}}
\put(5,3){\circle*{0.2}}
\put(1,4){\circle*{0.2}}
\put(2,4){\circle{0.2}}
\put(3,4){\circle{0.2}}
\put(4,4){\circle*{0.2}}
\put(1,5){\circle*{0.2}}
\put(2,5){\circle{0.2}}
\put(3,5){\circle*{0.2}}
\put(1,6){\circle*{0.2}}
\put(2,6){\circle*{0.2}}
\put(1,7){\circle*{0.275}}
\end{picture}
\vglue-7mm
\caption{\label{statespace}When $N=6$, the state space $\mathscr{X}$ is represented by $28$ dots, of which $10$ are interior states (open dots), $15$ are nonabsorbing boundary states (solid dots), and $3$ are absorbing states (larger solid dots).  Line segments show possible transitions.  There are six from each interior state and two from each nonabsorbing boundary state.}
\end{figure}

\begin{theorem}\label{exactPoisson}
For $\bm x\in\text{Int}(\mathscr{X})$ and $\bm y$ in the set of nonabsorbing boundary states of $\mathscr{X}$, define 
\begin{equation*}
P(\bm x,\bm y):=P_{\bm x}(\text{chain first reaches boundary at }\bm y),
\end{equation*}
so that $\bm P$ is an $\binom{N-1}{2}\times3(N-1)$ matrix.  Then
\begin{equation*}
\bm P=(\bm I-\bm Q)^{-1}\bm S.
\end{equation*}
\end{theorem}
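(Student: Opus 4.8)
The plan is to derive a linear system for $\bm P$ by conditioning on the first step of the chain, and then to show that the coefficient matrix $\bm I-\bm Q$ is invertible, so that the system has a unique solution given by the stated formula.

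First I would fix a nonabsorbing boundary state $\bm y$ and, for $\bm x\in\text{Int}(\mathscr{X})$, condition on the first transition out of $\bm x$. With probability $S(\bm x,\bm y)$ the chain lands on $\bm y$ immediately; with probability $Q(\bm x,\bm z)$ it moves to another interior state $\bm z$, from which the Markov property gives probability $P(\bm z,\bm y)$ of first reaching the boundary at $\bm y$; any transition to a boundary state other than $\bm y$ contributes nothing. Summing over the possibilities yields
\[
P(\bm x,\bm y)=S(\bm x,\bm y)+\sum_{\bm z\in\text{Int}(\mathscr{X})}Q(\bm x,\bm z)\,P(\bm z,\bm y),
\]
which in matrix form reads $\bm P=\bm S+\bm Q\bm P$, i.e.\ $(\bm I-\bm Q)\bm P=\bm S$.

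Next I would argue that $\bm I-\bm Q$ is invertible. Since the chain is absorbing\,---\,from every interior state the boundary is reachable in finitely many steps with positive probability, and $\text{Int}(\mathscr{X})$ is finite\,---\,there exist an integer $m$ and $\delta>0$ such that, from any interior state, the probability of having hit the boundary within $m$ steps is at least $\delta$; equivalently, every row sum of $\bm Q^m$ is at most $1-\delta$. Hence $\|\bm Q^{km}\|_\infty\le(1-\delta)^k\to0$, so $\bm Q^n\to\bm 0$ and the Neumann series $\sum_{n\ge0}\bm Q^n$ converges. Letting $M\to\infty$ in the telescoping identity $(\bm I-\bm Q)\sum_{n=0}^{M}\bm Q^n=\bm I-\bm Q^{M+1}$ shows that $(\bm I-\bm Q)^{-1}=\sum_{n\ge0}\bm Q^n$ exists, whence $\bm P=(\bm I-\bm Q)^{-1}\bm S$.

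The main obstacle\,---\,really the only nontrivial point\,---\,is justifying the invertibility of $\bm I-\bm Q$; everything else is bookkeeping. For the concrete gambler's ruin chain this can be exhibited directly: from any interior state one can transfer units repeatedly toward a fixed pair so as to drive one coordinate to $0$ in at most $N$ rounds, giving a uniform lower bound on the $m$-step absorption probability with $m=N$. I would close by noting that uniqueness of the solution is immediate once $\bm I-\bm Q$ is known to be invertible, so the formula $\bm P=(\bm I-\bm Q)^{-1}\bm S$ both solves the system and is the only matrix that does.
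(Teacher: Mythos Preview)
Your argument is correct and is exactly the standard proof: the paper does not give its own argument here but simply cites Kemeny and Snell (1976, Theorem~3.3.7), whose proof is the first-step decomposition $(\bm I-\bm Q)\bm P=\bm S$ together with invertibility of $\bm I-\bm Q$ via $\bm Q^n\to\bm0$ for an absorbing chain. There is nothing to add.
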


The function $P(\bm x,\bm y)$ is called the \textit{Poisson kernel} or \textit{harmonic measure}.

\begin{example}
When $N=6$, $|\mathscr{X}|=\binom{6+2}{2}=28$, with the $\binom{6-1}{2}=10$ interior states ordered $114$, $123$, $132$, $141$, $213$, $222$, $231$, $312$, $321$, $411$, and the $3(6-1)=15$ nonabsorbing boundary states ordered $015$, $024$, $033$, $042$, $051$, $105$, $204$, $303$, $402$, $501$, $150$, $240$, $330$, $420$, $510$.  The Poisson kernel is given by Figure~\ref{Harmonic}.

\begin{figure}[htb]
\begin{center}
\includegraphics[height=1.9in]{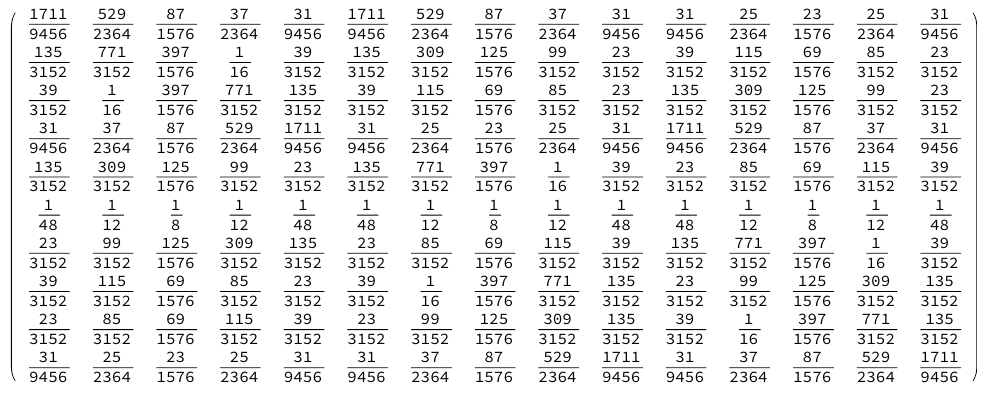}
\caption{\label{Harmonic}The Poisson kernel for $N=6$.  Rows are labeled by initial interior states ($114$, $123$, $132$, $141$, $213$, $222$, $231$, $312$, $321$, $411$), and columns by nonabsorbing boundary states ($015$, $024$, $033$, $042$, $051$, $105$, $204$, $303$, $402$, $501$, $150$, $240$, $330$, $420$, $510$).}
\end{center}
\end{figure}

From this we have the chance that the first absorption occurs at a given boundary point.  For the two remaining players, classical gambler's ruin gives the probability of the final outcome.  Summing over the appropriate part of the boundary gives the chances of the various elimination orders.  For $N=6$, these are given in Figure~\ref{EliminationOrder}.  Here the row ordering is as before, whereas the column ordering is $123$, $132$, $213$, $231$, $312$, $321$.

\begin{figure}[H]
\begin{center}
\includegraphics[height=1.9in]{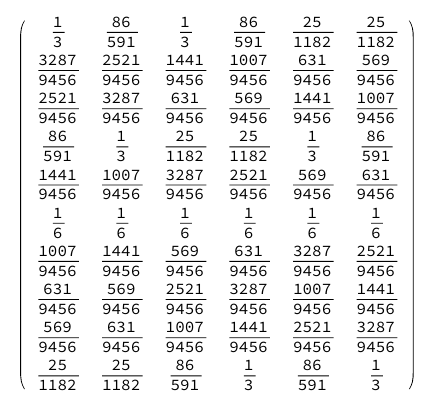}
\caption{\label{EliminationOrder}The probabilities of the six elimination orders for $N=6$.  Rows are labeled by initial interior states ($114$, $123$, $132$, $141$, $213$, $222$, $231$, $312$, $321$, $411$) and columns by elimination orders ($123$, $132$, $213$, $231$, $312$, $321$).}
\end{center}
\end{figure}
\end{example}

\textit{Mathematica} code, for arbitrary $N$, is provided in the supplementary materials (see Section~\ref{suppl}).  The only computationally difficult part of the program is inverting an $\binom{N-1}{2}\times\binom{N-1}{2}$ matrix.  When $N=200$ (the largest $N$ for which we have results), this matrix is $19{,}701\times 19{,}701$ and the program runtime (in double precision) was about 97 hours.  A faster alternative is described in Subsection~\ref{subsec:iteration}.

A very interesting paper by Swan and Bruss (2006) suggests that much larger  problems might be tackled. Their ideas apply to more general absorbing chains, but let us specialize to the three-player gambler's ruin. They partition the transient states into disjoint ``levels'' and observe that the transition matrix can be written as a block tridiagonal matrix (up to ``corner effects'') with considerably smaller blocks. Their second idea is to derive a ``folded'' chain on the even blocks. This has the same block tridiagonal form and so recursion can be used. Finally the absorption probabilities for the chain started in the odd blocks can be filled in. They do an order of magnitude calculation of the number of operations involved (along the lines of ``it takes order $n^3$ steps to invert an $n \times n$ matrix'') and conclude that the new algorithm would run a factor of $N^2$ steps faster than the straightforward matrix inversion we have used above.  The indexing is fairly sophisticated and we have not attempted to implement their fine ideas.

Using weighted directed multigraphs, David (2015) was able to reduce the number of transient states by about a factor of two.  His results, with $N$ as large as 192, are consistent with ours.  For application of this approach to four-player gambler's ruin, see Marfil and David (2020).

Gilliland, Levental, and Xiao (2007) found a way to avoid the inversion of large matrices in a one-dimensional gambler's ruin problem, but we have not been able to adapt their approach to the present setting.

These same techniques work for general absorbing Markov chains.  We have used them (supplementary materials, Section~\ref{suppl}) to compute the elimination order probabilities for $k=4$ players, requiring the inverse of an $\binom{N-1}{3}\times\binom{N-1}{3}$ matrix.  When $N=50$ (the largest $N$ for which we have results), this matrix is $18{,}424\times18{,}424$ and the program runtime (in single precision) was about 84.5 hours.  Here the walk takes place in a discrete 4-simplex.  Initial absorption is on one of the four triangular faces, and from there to final absorption one can apply the three-player results.

Our colleague Lexing Ying points out that the matrix $\bm I-\bm Q$ in Theorem~\ref{exactPoisson} is sparse (it has at most seven nonzero entries per row).  Sparse matrix inversion is a standard ``off the shelf'' tool in languages such as \textit{MATLAB}.  A useful textbook account is in Davis (2006).  Using these techniques, Ying was able to write code that generates results for $N$ as large as 3200 in about 2 minutes on a laptop computer.  He graciously agreed to determine the probabilities of the six elimination orders for the WSOP data of Example~\ref{wsop-example}, in which $N=1287$, and we

\subsection{Arbitrarily precise computation by Jacobi iteration}\label{subsec:iteration}

Fix an elimination order $\sigma\in S_3$ and total capital $N$.  Let $P_{A,B}$ be short for $P_{A,B,N-A-B}(\sigma)$.  Then, for $A,B\ge1$ with $A+B\le N-1$,
\begin{equation*}
P_{A,B}=\frac16(P_{A-1,B+1}+P_{A+1,B-1}+P_{A-1,B}+P_{A+1,B}+P_{A,B-1}+P_{A,B+1})
\end{equation*}
with boundary conditions determined by $\sigma$.  This may be used in two ways.  Start with any values for the $P_{A,B}$ agreeing with the boundary conditions, say all $P_{A,B}=\frac16$ except when $A=0$, $B=0$, or $A+B=N$.  Then repeatedly iterate this recurrence.  Again this may be done in two ways, either using (at stage $n$) $P^{n+1}$ in terms of $P^n$ or using updated values as they become available.  This method was used by Kim (2005) and seen to converge well for small values of $N$ (e.g., $N=16$).

A second approach harnesses a monotonicity property of the recurrence.  Let $P_{A,B}^*$ be the true gambler's ruin probabilities.  If $P_{A,B}^n\le P_{A,B}^*$ for all $A,B$, then $P_{A,B}^{n+1}\le P_{A,B}^*$ for all $A,B$.  Similarly for $P_{A,B}^n\ge P_{A,B}^*$.  Thus, starting the recurrence off with the correct boundary values and all other $P_{A,B}^{0,-}\equiv0$ and $P_{A,B}^{0,+}\equiv1$ gives
\begin{equation*}
P_{A,B}^{n,-}\le P_{A,B}^*\le P_{A,B}^{n,+}\text{ for all }A,B\text{ and }n.
\end{equation*}
When the lower and upper bounds are suitably close, this gives sharp control of $P_{A,B}^*$.  For a proof of convergence and further development, history, and references, see Ethier (2010, Theorem 7.2.4).

We adopt the latter approach, and we find that we can ensure the desired accuracy (18 significant digits) with $2N^2$ iterations.  \textit{Mathematica} code (for arbitrary $N$) is provided in the supplementary materials.  No matrix inversion is needed, so the program runs faster and uses much less memory than the one based on Markov chain methods.  When $N=200$, the program runtime (in double precision) was about 19 hours.  When $N=300$ (the largest $N$ for which we have results) it was about 98.5 hours.

The output of this program is a list of $P_{A,B,C}(123)$ for all $A,B,C\ge1$ with $A+B+C=N$.  If, for example, we want $P_{1,1,N-2}(321)$, we simply look up $P_{N-2,1,1}(123)$ instead.  Thus, there is no real loss of information in this condensed form of the output.

While this method allows for a larger $N$ in evaluating the three-player elimination order probabilities than the Markov chain method does ($N=300$ vs.\ $N=200$), the improvement is more significant in the four-player setting.  Here we generate the probabilities $P_{A,B,C,D}(1234)$ for all $A,B,C,D\ge1$ with $A+B+C+D=N$ and again find that $2N^2$ iterations suffice to ensure the desired accuracy (9 significant digits).  When $N=100$ (the largest $N$ for which we have results), the runtime was about 36 hours.  \textit{Mathematica} code is provided in the supplementary materials, but C++ code would run substantially faster.

\subsection{Linear interpolation from exact probabilities}\label{subsec:interpolation}

The virtually exact results for $N=300$ can be used to get useful approximations for other $N$.  Given positive integers $A$, $B$, and $C$, let $N:=A+B+C$ and
\begin{equation*}
A_0:=A\,\frac{300}{N},\quad B_0:=B\,\frac{300}{N},\quad C_0:=C\,\frac{300}{N}.
\end{equation*}
Typically, these are not integers.  Therefore, consider the four points 
\begin{align*}
\bm v_{00}&:=(\lfloor A_0\rfloor,\lfloor B_0\rfloor,300-\lfloor A_0\rfloor-\lfloor B_0\rfloor),\\
\bm v_{01}&:=(\lfloor A_0\rfloor,\lceil B_0\rceil,300-\lfloor A_0\rfloor-\lceil B_0\rceil),\\
\bm v_{10}&:=(\lceil A_0\rceil,\lfloor B_0\rfloor,300-\lceil A_0\rceil-\lfloor B_0\rfloor),\\
\bm v_{11}&:=(\lceil A_0\rceil,\lceil B_0\rceil,300-\lceil A_0\rceil-\lceil B_0\rceil),
\end{align*}
belonging to $\mathscr{X}$, and discard the one ($\bm v_{00}$ or $\bm v_{11}$) whose third coordinate is neither $\lfloor C_0\rfloor$ nor $\lceil C_0\rceil$.  The remaining three points, call them $(A_1,B_1,C_1)$, $(A_2,B_2,C_2)$, and $(A_3,B_3,C_3)$, form a triangle with $(A_0,B_0,C_0)$ belonging to its interior, and we can estimate $P_{A,B,C}(\sigma)$ by linear interpolation from the three values of $P_{A_i,B_i,C_i}(\sigma)$ ($i=1,2,3$).

The key idea is to represent $(A_0,B_0,C_0)$ in barycentric coordinates.  The relevant weights are
\begin{equation*}
\setlength{\arraycolsep}{2mm}
\binom{\lambda_1}{\lambda_2}:=\begin{pmatrix}A_1-A_3&A_2-A_3\\B_1-B_3&B_2-B_3\end{pmatrix}^{-1}\binom{A_0-A_3}{B_0-B_3}\quad\text{and}\quad
\lambda_3:=1-\lambda_1-\lambda_2,
\end{equation*}
so that
\begin{equation*}
(A_0,B_0,C_0)=\lambda_1(A_1,B_1,C_1)+\lambda_2(A_2,B_2,C_2)+\lambda_3(A_3,B_3,C_3),
\end{equation*}
and our interpolation estimate is then
\begin{equation*}
\bar P_{A,B,C}(\sigma):=\lambda_1 P_{A_1,B_1,C_1}(\sigma)+\lambda_2 P_{A_2,B_2,C_2}(\sigma)+\lambda_3 P_{A_3,B_3,C_3}(\sigma).
\end{equation*}

\begin{example}\label{wsop-interp3}
As described in Example~\ref{wsop-example}, the final three players in the 2019 WSOP Main Event had chip counts (in units of $400{,}000$ chips, or $1/5$ of the big blind) equal to $A=169$, $B=301$, and $C=817$.  Thus, $N=1287$ and $A$, $B$, and $C$, multiplied by $300/N$, are $A_0\doteq39.39$, $B_0\doteq70.16$, and $C_0\doteq190.44$.  It follows that $(A_1,B_1,C_1)=(39,70,191)$, $(A_2,B_2,C_2)=(39,71,190)$, and $(A_3,B_3,C_3)=(40,70,190)$.  The weights can then be evaluated as
\begin{equation*}
\lambda_1=\frac{190}{429},\quad\lambda_2=\frac{70}{429},\quad\lambda_3=\frac{13}{33},
\end{equation*}
and we can look up the probabilities $P_{A_i,B_i,C_i}(\sigma)$ for $i=1,2,3$ and each $\sigma$, with results shown in Table~\ref{table:wsop-interp3}.

\begin{table}[H]
\caption{\label{table:wsop-interp3}Linearly interpolating elimination order probabilities from $N=300$ data.  Here $A=169$, $B=301$, and $C=817$ from Example~\ref{wsop-example}.}
\tabcolsep=1.2mm 
\begin{center}
\begin{tabular}{ccccccc}
\hline
\noalign{\smallskip}
$\sigma$ & 123 & 132 & 213 & 231 & 312 & 321  \\
\noalign{\smallskip}\hline
\noalign{\smallskip}
$P_{39,70,191}(\sigma)$ & 0.422050 & 0.207786 & 0.214617 & 0.105295 & 0.025547 & 0.024705 \\
\noalign{\smallskip}
$P_{39,71,190}(\sigma)$ & 0.422204 & 0.210495 & 0.211129 & 0.104734 & 0.026172 & 0.025266 \\
\noalign{\smallskip}
$P_{40,70,190}(\sigma)$ & 0.415774 & 0.206898 & 0.217559 & 0.107757 & 0.026436 & 0.025576 \\
\noalign{\smallskip}\hline
\noalign{\smallskip}
$\bar P_{A,B,C}(\sigma)$ & 0.419603 & 0.207878 & 0.215207 & 0.106174 & 0.025999 & 0.025139 \\
\noalign{\smallskip}\hline
\end{tabular}
\end{center}
\end{table}

The scaling conjecture and observed smoothness of $P_{A,B,C}(\sigma)$ in $A$, $B$, and $C$ suggest that this will be a good approximation.  One way to assess the accuracy of the method is to use it to estimate probabilities that are already known; we have done so in several cases, and it appears that the interpolated probabilities are accurate to four or five decimal places.  See Example~\ref{wsop-regression} below for an alternative approach.  

Note that rounded proportions often do not sum precisely to 1.  See Diaconis and Freedman (1979).
\end{example}

\subsection{Monte Carlo methods}\label{subsec:Monte}

While the interpolation method of Subsection~\ref{subsec:interpolation} is our method of choice, this subsection records a further approximation method, Monte Carlo.  Guanyang Wang suggested a straightforward Monte Carlo procedure that approximates $P_{A,B,C}(\sigma)$ for a given $A,B,C\ge1$ and all $\sigma\in S_3$.  Simply run the Markov chain, starting at $(A,B,C)$, until it first reaches the boundary.  If $N:=A+B+C$ and the Markov chain first reaches the boundary at $(0,x,N-x)$, for example, then $\sigma=123$ and $\sigma=132$ are counted $(N-x)/N$ and $x/N$ times, by virtue of the two-player gambler's ruin formula.  Do this repeatedly, recording the proportion of times each $\sigma\in S_3$ occurs, and use these proportions as estimates.  A difficulty is that this procedure is rather slow.  For example, the expected number of steps for the Markov chain to first reach the boundary is given by \eqref{ET_1}, which is $96{,}876.4$ when using the WSOP data of Examples~\ref{wsop-example} and \ref{wsop-interp3} ($A=169$, $B=301$, and $C=817$).  

Wang suggested an optimization method to speed up the process.  Starting from state $(x_1,x_2,x_3)$, let $m=\min(x_1,x_2,x_3)$ and consolidate the next $m$ steps of the Markov chain into a single step by simulating $(n_1,n_2,n_3)\sim\text{multinomial}(m,\frac13,\frac13,\frac13)$, with $n_i$ representing the number of matchups in the next $m$ trials not involving player $i$, and $\zeta_i\sim\text{binomial}(n_i,\frac12)$ ($i=1,2,3$), with $\zeta_i$ representing the number of the $n_i$ matchups won by player $\text{mod}(i,3)+1$.

Wang has written \textit{R} code and shown that it works well for quite large $N$ (and also for $k=4$).  Starting with the just mentioned WSOP data ($A=169$, $B=301$, and $C=817$), the standard Monte Carlo procedure requires 2.7 seconds per sample path or 7.5 hours for sample size $10^4$, with the optimized procedure requiring only 0.0182 seconds per sample path or 3 minutes for sample size $10^4$ (148 times faster).  Example~\ref{wsop-regression} below compares simulation results (optimized, with sample size $10^6$) with other approximations.

\subsection{Analytic approximation}\label{subsec:analytic}

Some rather sophisticated analysis (John and inner uniform domains, Whitney covers, parabolic Harnack inequalities, Carlesson estimates) has been applied to get analytic approximations to the harmonic measure (Diaconis, Houston-Edwards, and Saloff-Coste, 2021).  The results apply to the $k$-player gambler's ruin problem, but we will content ourselves with the case $k=3$.  Code things up as in Figure~\ref{statespace} with two integer coordinates $x_1$, $x_2$ in the triangle $x_1,x_2\ge0$, $x_1+x_2\le N$.  This corresponds to $A=x_1$, $B=x_2$, and $C=N-x_1-x_2$.  By symmetry, it is enough to have approximations to 
\begin{equation*}
P(\bm x,(y,0)):=P_{\bm x}(\text{walk first reaches boundary at }(y,0))
\end{equation*}
with $\bm x=(x_1,x_2)$ in the interior of $\mathscr{X}$, satisfying $2x_1+x_2\le N$.  The boundary point $(y,0)$ has $0<y<N$.

\begin{theorem}[Diaconis, Houston-Edwards, Saloff-Coste, 2021]\label{D,HE,SC-thm}
For $x_1,x_2,y$ as above,
\begin{equation}\label{D,HE,SC-eq}
P(\bm x,(y,0))\approx\frac{x_1 x_2 (x_1+x_2)(N-x_1-x_2)(N-x_2)y^2(N-y)^2}{N^4(x_1+d)^2(x_2+d)^2(x_1+x_2+2d)^2}
\end{equation}
with $d$ being the graph distance from $\bm x$ to $(y,0)$.  Here $a_N\approx b_N$ means there exist positive $c$ and $c'$ (universal) such that 
\begin{equation*}
c\,a_N\le b_N\le c'a_N
\end{equation*}
for all $N$.  The constants implicit in \eqref{D,HE,SC-eq} are uniform for all $\bm x,y$.
\end{theorem}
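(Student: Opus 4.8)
The plan is to deduce \eqref{D,HE,SC-eq} from the heat-kernel and harmonic-measure theory for inner-uniform domains developed in Diaconis, Houston-Edwards, and Saloff-Coste (2020); throughout, $\approx$ is as in the statement, with \emph{absolute} constants (uniform in $N$). Let $\bm Q$ be the substochastic kernel of the walk killed on leaving $\text{Int}(\mathscr{X})$, and $\bm G_N:=(\bm I-\bm Q)^{-1}$ its Green's function; $\bm G_N$ is symmetric, the killed walk being reversible for counting measure. From $\bm P=\bm G_N\bm S$ (Theorem~\ref{exactPoisson}) and the fact that the boundary state $(y,0)$ has at most two interior neighbours, $(y-1,1)$ and $(y,1)$, each entered with probability $\tfrac16$, one has
\begin{equation*}
P(\bm x,(y,0))=\tfrac16\sum_{\bm z\in\text{Int}(\mathscr{X}),\;\bm z\sim(y,0)}G_N(\bm x,\bm z).
\end{equation*}
By the boundary Harnack principle (Carleson estimate) for the killed walk these few values are mutually comparable, so it suffices to estimate $G_N(\bm x,\bm z_y)$ for one fixed interior point at graph distance $O(1)$ from $(y,0)$, say $\bm z_y:=(y,1)$.

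\textbf{The analytic machinery.} Rescaled by $1/N$, $\mathscr{X}$ is a discretisation of a fixed bounded \emph{convex}---hence inner-uniform---planar domain (intrinsically an equilateral triangle, its three faces being symmetric under the walk), and the walk is symmetric, of bounded range, and uniformly elliptic. Hence volume doubling and the Poincar\'e inequality hold at all scales up to the diameter; the parabolic Harnack inequality follows, and, by the inner-uniform-domain theory of Gyrya and Saloff-Coste and of Diaconis, Houston-Edwards, and Saloff-Coste (built on John and inner-uniform domains, Whitney covers, and Carleson estimates---which also furnish the comparison used just above), the Dirichlet Green's function of the killed walk admits, in this two-dimensional setting, the two-sided bound
\begin{equation*}
G_N(\bm x,\bm z)\approx h_N(\bm x)\,h_N(\bm z)\int_{d(\bm x,\bm z)}^{N}\frac{ds}{s\;h_N(\bm x_s)\,h_N(\bm z_s)},
\end{equation*}
where $h_N$ is the harmonic profile of $\mathscr{X}$, $d$ is graph distance, and $\bm w_s$ denotes a point at distance $\approx s$ from $\bm w$ pushed into the interior (so that $\bm w_s$ sits at distance $\approx s$ from the boundary).

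\textbf{The harmonic profile.} The one input special to this domain is the identification
\begin{equation*}
h_N(x_1,x_2)\approx\frac{x_1\,x_2\,(x_1+x_2)\,(N-x_1-x_2)\,(N-x_1)\,(N-x_2)}{N^4}.
\end{equation*}
This degree-six polynomial is not harmonic; the true profile is the principal Dirichlet eigenfunction of the triangle, which---in contrast to the product state spaces that arise for several players---does \emph{not} separate variables, the genuine source of the difficulty (and of Bachelier's remark quoted at the head of the paper). It is, however, comparable to the true profile: along the flat parts of the boundary it is $\approx$ the distance to the nearest face, and near each corner, whose intrinsic opening angle is $\pi/3$, it vanishes to the correct order $\pi/(\pi/3)=3$ and with the right angular behaviour (it matches $\rho^{3}\sin 3\varphi$)---precisely the $N^{3}$ decay flagged in the introduction. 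One proves the comparison by patching explicit local super- and sub-solutions near each face and corner with the Harnack inequality, or simply quotes this computation from Diaconis, Houston-Edwards, and Saloff-Coste (2020).

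\textbf{Assembly, and the main obstacle.} Inserting the profile, the integral above is dominated by the crossing scale $s\approx d$, where $h_N(\bm x_d)\,h_N((\bm z_y)_d)\approx(x_1+d)^{2}(x_2+d)^{2}(x_1+x_2+2d)^{2}/N^{2}$; here one uses the standing hypothesis $2x_1+x_2\le N$ to observe that $N-x_1\in[N/2,N]$ is harmless and is absorbed into the $N^{4}$. Combined with $h_N(\bm z_y)\approx y^{2}(N-y)^{2}/N^{3}$, this collapses the bound to $h_N(\bm x)\,h_N(\bm z_y)$ divided by $h_N(\bm x_d)\,h_N((\bm z_y)_d)$, which is exactly the right side of \eqref{D,HE,SC-eq}. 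The genuine work is (i) establishing, uniformly in $N$, the parabolic Harnack inequality and the heat-kernel/Green's-function bounds of the second step---heavy, but essentially known technology---and (ii) carrying out the evaluation of $h_N(\bm x_d)\,h_N((\bm z_y)_d)$ (equivalently, locating $\bm x_d$) in every regime: when $\bm x$ lies near the face $x_3=0$, or near a vertex, or when $(y,0)$ is near a vertex, some factors of $h_N$ degenerate and must be tracked, though the explicit polynomial of the third step reduces each such case to a finite rational computation. As a check, \eqref{D,HE,SC-eq} with $\bm x=(1,N-2)$---so that $A=1$, $B=N-2$, $C=1$, and the face $x_2=0$ now records elimination of the player holding the large stack---summed over $1\le y\le N-1$ gives $\sum_y P\approx N^{-3}$, recovering $P_{1,1,N-2}(\text{player 3 goes broke first})\approx\text{const}/N^{3}$.
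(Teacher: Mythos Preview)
The paper does not prove Theorem~\ref{D,HE,SC-thm}; it is quoted from Diaconis, Houston-Edwards, and Saloff-Coste (2020) and then \emph{applied} (to derive Theorem~\ref{O(N^{-3})}, Corollary~\ref{2/N}, and the remarks that follow). So there is no ``paper's own proof'' to compare against. What you have written is, in effect, a reader's sketch of the argument in the cited DHESC paper, and at that level it is faithful: reduction of harmonic measure to the killed Green's function via $\bm P=\bm G_N\bm S$, two-sided Green's-function bounds coming from inner-uniform-domain theory (volume doubling, Poincar\'e, parabolic Harnack, Carleson estimates), identification of the harmonic profile as a degree-six polynomial vanishing linearly along faces and cubically at the $\pi/3$ corners, and then assembly. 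Your sanity check at the end reproduces exactly the computation the paper does carry out in its proof of Theorem~\ref{O(N^{-3})}.

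The one place your sketch is thin is the ``Assembly'' paragraph. The claim that the integral $\int_d^N \frac{ds}{s\,h_N(\bm x_s)\,h_N(\bm z_s)}$ is dominated by the lower endpoint $s\approx d$, and the evaluation $h_N(\bm x_d)\,h_N((\bm z_y)_d)\approx (x_1+d)^2(x_2+d)^2(x_1+x_2+2d)^2/N^2$, are stated rather than justified; the latter in particular requires saying what ``push $\bm x$ a distance $d$ into the interior'' means geometrically in the triangle and then checking the six factors of the profile case by case (which corner or face is nearest, how the remaining factors scale). You acknowledge this is ``the genuine work,'' but a reader would want at least one regime worked out in full. None of this is a gap in the sense of a wrong idea---it is the expected level of detail for a sketch of a result imported from elsewhere.
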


Let us illustrate this result by proving the $1/N^3$ result claimed in Section~\ref{sec:intro}.

\begin{theorem}\label{O(N^{-3})}
\begin{equation*}
P_{1,1,N-2}(\text{\rm player 3 goes broke first})\approx \frac{1}{N^3}.
\end{equation*}
\end{theorem}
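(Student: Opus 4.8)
The plan is to apply Theorem~\ref{D,HE,SC-thm} with the starting interior state $\bm x = (x_1,x_2) = (1,1)$, so that $C = N - 2$, and to sum the resulting two-sided estimates over the appropriate portion of the boundary. Player~3 goes broke first precisely when the walk first hits the boundary face $\{C = 0\}$, i.e.\ the set of boundary points $(y, N - y)$ with $1 \le y \le N-1$; by the two-coordinate coding in Figure~\ref{statespace} this face is $x_1 + x_2 = N$. However, the clean formula \eqref{D,HE,SC-eq} is stated for boundary points of the form $(y,0)$, so first I would invoke the $S_3$-symmetry of the model: relabel the players so that the face $\{C=0\}$ becomes the face $\{x_2 = 0\}$, under which the start $(1,1)$ maps to a state of the form $(x_1,x_2)$ with one coordinate equal to $1$ and $2x_1 + x_2 \le N$ after possibly swapping to satisfy the hypothesis. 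Concretely, hitting $\{C=0\}$ first is the same event as, in the relabeled chain, hitting $\{x_2 = 0\}$ first, and the start becomes $(1, N-2)$ or $(N-2,1)$ — we pick the labeling that respects $2x_1 + x_2 \le N$, which for $N$ large forces $(x_1,x_2) = (1, N-2)$.

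Next I would substitute $x_1 = 1$, $x_2 = N - 2$ into \eqref{D,HE,SC-eq} and read off the order of magnitude of the summand as a function of $y$ and $N$. The numerator factors $x_1 x_2 (x_1 + x_2)(N - (x_1 + x_2))(N - x_2)$ become, up to constants, $1 \cdot N \cdot N \cdot 1 \cdot N = \Theta(N^3)$ (here $N - (x_1+x_2) = 1$ and $N - x_2 = 2$), leaving the $y$-dependent factor $y^2 (N - y)^2$ and the denominator $N^4 (x_1 + d)^2 (x_2 + d)^2 (x_1 + x_2 + 2d)^2$. The distance $d = d(y)$ from $(1, N-2)$ to $(y, 0)$ is, up to a bounded factor, $\max(|y - 1|, N - 2) \asymp N$ uniformly in $y$ (the walk has to traverse roughly $N$ units in the $x_2$-direction no matter what). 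Hence $x_1 + d \asymp N$, $x_2 + d \asymp N$, $x_1 + x_2 + 2d \asymp N$, so the denominator is $\asymp N^4 \cdot N^2 \cdot N^2 \cdot N^2 = N^{10}$, and the summand is $\asymp N^3 \cdot y^2 (N-y)^2 / N^{10} = y^2 (N-y)^2 / N^7$.

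Then I would sum over $1 \le y \le N - 1$. Since $\sum_{y=1}^{N-1} y^2 (N - y)^2 \asymp N^5$ (it is a Riemann sum for $N^5 \int_0^1 t^2(1-t)^2\,dt$), the total is $\asymp N^5 / N^7 = N^{-2}$. That gives $P_{1,1,N-2}(\text{player 3 broke first}) \asymp N^{-2}$, not $N^{-3}$ — so a naive reading is off by one power, and the resolution of this discrepancy is the crux of the argument. The fix is that the two-player gambler's ruin weighting has been omitted: hitting the face $\{C = 0\}$ at the point $(y, N-y)$ does not by itself mean player~3 was eliminated \emph{first} in a way that contributes uniformly — wait, it does mean player~3 is broke, but we must be careful that the event in the theorem statement ("player 3 goes broke first") is exactly "first hitting of the boundary is on the face $C = 0$," with no further gambler's-ruin factor needed. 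Re-examining: the extra factor must instead come from the fact that $d(y)$ is \emph{not} uniformly $\asymp N$ — for $y$ within $O(1)$ of $1$, the point $(y,0)$ is close to $(1,0)$ but the start is $(1, N-2)$, so $d \asymp N$ still; the real subtlety is that for the summand to be summed correctly one should note $x_2 + d \asymp x_2 \asymp N$ but also that when $y$ is near $N$, $(y,0)$ is near the corner $(N,0)$ and $d \asymp N$ as well. So I would recompute $d(y)$ carefully using the triangular lattice metric: $d((1,N-2),(y,0))$. In the hexagonal/triangular walk the graph distance between $(a,b)$ and $(c,0)$ is governed by the three "coordinates" $(a, b, N-a-b)$ vs $(c, 0, N-c)$, and equals roughly $\max$ of the coordinate differences — here $|1 - c|$ on one axis and $N - 2$ on another and $|N - 3 - (N - c)| = |c - 3|$ on the third, so $d \asymp \max(N, |c-1|) \asymp N$. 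This confirms $d \asymp N$, so the $N^{-2}$ computation stands unless I have mis-identified the numerator.

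The main obstacle, therefore, is pinning down which geometric quantity in \eqref{D,HE,SC-eq} actually produces the extra factor of $1/N$, and I expect the correct bookkeeping to be: with $\bm x = (1, N-2)$, the factor $N - (x_1 + x_2) = N - (N-1) = 1$ is already as small as it gets, but the factor $(x_1 + x_2 + 2d)^2$ in the denominator is $\asymp (N + 2d)^2$ with $d \asymp N$, and it is really $d$, the distance, that should be re-examined as a function of $y$: for $y$ in the bulk, $d \asymp N$, but the \emph{correct} count is $\sum_y y^2(N-y)^2/[N^4 (1+d)^2(N-2+d)^2(N-1+2d)^2]$, and since $d(y) \ge$ const$\cdot(N-2)$ always but $d(y) \asymp |y - 1| + N$ can be as large as $\asymp N$ for all $y$, the sum is dominated by... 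I would carefully verify that $\sum_y y^2(N-y)^2 d(y)^{-4} \cdot (\text{const})$ actually yields $N^{-3}$ rather than $N^{-2}$, which happens iff $d(y)$ grows like $N + y$ rather than staying $\asymp N$, contributing an extra convergent factor $\sum_y y^2 (N-y)^2 (N+y)^{-6} \asymp \sum_y y^2/(N+y)^4 \asymp N^{-1}$ over the range $y \lesssim N$. I would nail this down by writing $d((1,N-2),(y,0)) \asymp N - 2 + |y-1|$ honestly from the triangular-lattice distance formula, then the summand is $\asymp y^2(N-y)^2 / [N^4 \cdot (N+y)^2 \cdot (N+y)^2 \cdot (N+y)^2]$ with the lower-order factors $(1+d)^2 \asymp (N+y)^2$ as well, giving $\asymp y^2 (N-y)^2/[N^4 (N+y)^6]$; summing over $1 \le y \le N-1$ gives $\asymp N^{-4} \cdot N \cdot \max_y [y^2(N-y)^2/(N+y)^6] \cdot(\ldots)$ — the cleanest route is to split the sum at $y = N/2$: for $y \le N/2$, $(N - y) \asymp N$ and $(N + y) \asymp N$ so the contribution is $\asymp N^{-4}\cdot N^2 \cdot N^{-6} \sum_{y\le N/2} y^2 \asymp N^{-8} N^3 = N^{-5}$; hmm, that is too small. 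The genuine resolution — and the step I flag as the real work — is to track \emph{all} the $y$- and $d$-dependence simultaneously and not approximate $N - x_2 = 2$ carelessly, since with $x_2 = N - 2$ fixed the factor $(x_2 + d)$ is exactly $\asymp N$ and independent of $y$, while $(x_1 + d) \asymp 1 + d \asymp \max(1, |y-1|, N)$; a correct, clean evaluation of $\sum_{y=1}^{N-1}$ of the resulting rational function of $y$ and $N$ should, after the dust settles, yield $\Theta(N^{-3})$, and I would present that summation carefully as the heart of the proof, with the upper and lower bounds from $\approx$ tracked through every step.
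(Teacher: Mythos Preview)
Your setup and overall approach match the paper's exactly: relabel so that the starting point is $\bm x=(x_1,x_2)=(1,N-2)$, apply Theorem~\ref{D,HE,SC-thm} to each boundary point $(y,0)$, and sum over $y$. The paper also observes (correctly, as you do) that $d\approx N$ uniformly in $y$, so the denominator is $\approx N^{10}$.

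The gap is a pure bookkeeping slip in your numerator. You write the five non-$y$ factors as ``$1\cdot N\cdot N\cdot 1\cdot N=\Theta(N^3)$'' while simultaneously noting, correctly, that $N-x_2=2$. The fifth factor is $N-x_2=2$, not $N$; the product is
\[
x_1\,x_2\,(x_1+x_2)\,(N-(x_1+x_2))\,(N-x_2)=1\cdot(N-2)\cdot(N-1)\cdot 1\cdot 2\approx N^2,
\]
not $N^3$. With this correction the summand is $\approx N^2\,y^2(N-y)^2/N^{10}=y^2(N-y)^2/N^8$, and
\[
\sum_{y=1}^{N-1}\frac{y^2(N-y)^2}{N^8}
=\frac{1}{N^3}\cdot\frac{1}{N}\sum_{y=1}^{N-1}\Bigl(\frac{y}{N}\Bigr)^2\Bigl(1-\frac{y}{N}\Bigr)^2
\approx\frac{1}{N^3},
\]
exactly as the paper obtains. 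There is no missing factor of $1/N$ to hunt for: the entire second half of your proposal (the search through two-player weighting, the detailed analysis of $d(y)$, the splitting of the sum at $N/2$) is chasing a phantom created by that one miscopied factor. Fix the $2$ and the proof is complete and identical to the paper's.
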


\begin{proof}
To get things into the notation of Theorem~\ref{D,HE,SC-thm}, take $x_1=1$, $x_2=N-2$.  Then, for $0<y<N$,
\begin{equation*}
P(\bm x,(y,0))=P(\text{player 2 goes broke first at which time player 1 has }y).
\end{equation*}
For any $y$, $d\approx N$ so the denominator in \eqref{D,HE,SC-eq} is $\approx N^{10}$.  The numerator is $\approx N^2 y^2(N-y)^2$.  Thus,
\begin{equation*}
P(\bm x,(y,0))\approx \frac {y^2(N-y)^2}{N^8}=\frac{1}{N^4}\bigg(\frac{y}{N}\bigg)^2\bigg(1-\frac{y}{N}\bigg)^2.
\end{equation*}
Summing in $y$ and reversing the roles of players 2 and 3,
\begin{align}\label{B(3,3)result}
&P_{1,1,N-2}(\text{player 3 goes broke first})\nonumber\\
&\quad{}\approx \frac{1}{N^3}\,\frac{1}{N}\sum_{y=1}^{N-1}\bigg(\frac{y}{N}\bigg)^2\bigg(1-\frac{y}{N}\bigg)^2\sim \frac{B(3,3)}{N^3},
\end{align}
where $B$ denotes the beta function.
\end{proof}

\begin{corollary}\label{2/N}
\begin{equation*}
P_{1,1,N-2}(\text{\rm player 3 goes broke second})\sim\frac{2}{N}.
\end{equation*}
\end{corollary}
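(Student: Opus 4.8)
The plan is to condition on the eventual fate of player~3. In the three-player game, exactly one of three mutually exclusive events occurs: player~3 is eliminated first, player~3 is eliminated second, or player~3 is never eliminated (i.e., wins all $N$ units). Since absorption occurs almost surely, these three events partition the sample space, so
\begin{equation*}
P_{1,1,N-2}(\text{player 3 goes broke second})=1-P_{1,1,N-2}(\text{player 3 wins all})-P_{1,1,N-2}(\text{player 3 goes broke first}).
\end{equation*}

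First I would apply the classical formula \eqref{P(3 wins)} with $A=B=1$ and $C=N-2$, which gives $P_{1,1,N-2}(\text{player 3 wins all})=(N-2)/N=1-2/N$. Next, Theorem~\ref{O(N^{-3})} gives $P_{1,1,N-2}(\text{player 3 goes broke first})\approx 1/N^3$; in particular this probability is $O(1/N^3)$. Substituting these into the displayed identity yields
\begin{equation*}
P_{1,1,N-2}(\text{player 3 goes broke second})=\frac{2}{N}-O\!\left(\frac{1}{N^3}\right),
\end{equation*}
and dividing through by $2/N$ shows the ratio tends to $1$, which is precisely the assertion $P_{1,1,N-2}(\text{player 3 goes broke second})\sim 2/N$.

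There is essentially no obstacle here: the corollary is a one-line bookkeeping consequence of \eqref{P(3 wins)} together with Theorem~\ref{O(N^{-3})}. The only point worth a moment's care is that the full two-sided estimate of Theorem~\ref{O(N^{-3})} is far more than needed; one only uses the upper bound $P_{1,1,N-2}(\text{player 3 goes broke first})\le c'/N^3=o(1/N)$, so that the subtracted term does not disturb the leading-order behavior. As an optional remark one may note that, by the symmetry between players~1 and~2, $P_{1,1,N-2}(132)=P_{1,1,N-2}(231)\sim 1/N$, so the $2/N$ splits evenly between the two elimination orders in which player~3 is second.
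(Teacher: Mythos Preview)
Your proof is correct and follows exactly the same approach as the paper: partition on player~3's fate, apply \eqref{P(3 wins)} for the winning probability, and use Theorem~\ref{O(N^{-3})} to bound the first-elimination probability by $O(1/N^3)$, leaving $2/N - O(1/N^3)$. Your additional remarks (that only the upper bound from Theorem~\ref{O(N^{-3})} is needed, and that symmetry splits the $2/N$ evenly between $\sigma=132$ and $\sigma=231$) are correct and go slightly beyond the paper's terse statement.
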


\begin{proof}
The desired probability is
\begin{align*}
&{}1-P_{1,1,N-2}(\text{\rm player 3 wins all})-P_{1,1,N-2}(\text{\rm player 3 goes broke first})\\
&\qquad\quad{}=1-\frac{N-2}{N}-O(1/N^3)=\frac2N-O(1/N^3)
\end{align*}
by \eqref{P(3 wins)} and Theorem~\ref{O(N^{-3})}, and the result follows.
\end{proof}

\begin{remarks}
\begin{enumerate}[(1)]

\item The constant $B(3,3)=1/30$ in \eqref{B(3,3)result} is meaningless because of all the cruder approximations being used.  Now
\begin{equation*}
P_{1,1,N-2}(\text{player 3 goes broke first})=P_{1,1,N-2}(312)+P_{1,1,N-2}(321),
\end{equation*}
and because of symmetry,
\begin{equation*}
P_{1,1,N-2}(312)=P_{1,1,N-2}(321),
\end{equation*}
so Theorem~\ref{O(N^{-3})} implies
\begin{equation*}
P_{1,1,N-2}(312)=P_{1,1,N-2}(321)\approx\frac{1}{N^3}.
\end{equation*}

\item A similar calculation shows, for $1\le i<N/2$,
\begin{equation*}
P_{i,i,N-2i}(\text{player 3 goes broke first})\approx\frac{i^3}{N^3},
\end{equation*}
uniformly in $i$.  This is consistent with the scaling conjecture of Section~\ref{conjecture}.

\item The asymptotics above may be supplemented by the exact computing of Subsections~\ref{subsec:exact} and \ref{subsec:iteration}.  Table~\ref{asymp} gives $P_{1,1,N-2}(321)$ for $N=50$, 100, 150, 200, 250, 300 as well as these values multiplied by $N^3$.

\begin{table}[htb]
\caption{\label{asymp}The exact values of $P_{1,1,N-2}(321)$, rounded to 15 significant digits, suggesting that this quantity is asymptotic to $c/N^3$ for $c\doteq4.5597945$.}
\catcode`@=\active \def@{\hphantom{0}}
\tabcolsep=2mm 
\begin{center}
\begin{tabular}{clc}
\hline
\noalign{\smallskip}
$N$ & @@@$P_{1,1,N-2}(321)$  & $N^3 P_{1,1,N-2}(321)$  \\
\noalign{\smallskip}\hline
\noalign{\smallskip}
@50  & 0.0000364783779008280  & 4.55979723760 \\
100 & 0.00000455979467170448 & 4.55979467170 \\
150 & 0.00000135105023226911 & 4.55979453391 \\
200 & 0.000000569974313837992& 4.55979451070 \\
250 & 0.000000291826848279112& 4.55979450436 \\
300 & 0.000000168881277854908& 4.55979450208 \\
\noalign{\smallskip}\hline
\noalign{\smallskip}
\end{tabular}
\end{center}
\end{table}

\item In unpublished work, Sangchul Lee has used Ferguson's (1995) Brownian motion approximation to the discrete gambler's ruin problem to derive an analytical closed form expression for the constant $c\doteq4.5597945$ in Table~\ref{asymp}.  He shows
\begin{equation*}
c=\frac{\sqrt{\pi}}{3\sqrt3}\bigg(\frac{\Gamma(1/3)}{\Gamma(5/6)}\bigg)^3\doteq4.55979449996,
\end{equation*}
in remarkable agreement to the numbers in Table~\ref{asymp}.  The validity of the Brownian motion approximation has not been rigorously established to this degree.  See Denisov and Wachtel (2015).

\item Theorem~\ref{D,HE,SC-thm} allows proof of similar asymptotics for other values of $A$, $B$, and $C$.  For example, we have proved the following:
\begin{itemize}
\item For fixed $A,B\ge1$ and $C_N:=N-A-B$,
\begin{equation*}
P_{A,B,C_N}(321)\approx P_{A,B,C_N}(312)\approx\frac{1}{N^3}.
\end{equation*}
\item For $A=1$, $B_N=\lfloor\sqrt{N}\rfloor$, and $C_N:=N-1-B_N$,
\begin{equation*}
P_{1,B_N,C_N}(\text{player 3 goes broke first})\approx\frac{1}{N^2}.
\end{equation*}
\end{itemize}
Exact computations suggest that in the first case $N^3 P_{A,B,C_N}(321)$ and in the second case $N^2 P_{A,B_N,C_N}(\text{player 3 goes broke first})$ rapidly approach limits.

\item Similarly, $P_{1,B_N,C_N}(321)\approx P_{1,B_N,C_N}(312)\approx1/N^2$. This is a bit surprising.  Of course, the event that the player with the big stack is eliminated first is a rare event but then the advantage that player 2 had over player 1 disappears. Indeed, numerical computations show that, for this case, given player 3 is eliminated first, the conditional gambler's ruin probability that 1 is eliminated second is 1/2 to remarkable approximation.  For example, $P_{1,14,185}(321)\doteq0.000059822$ and $P_{1,14,185}(312)\doteq0.000059872$.

\item The results of Diaconis, Houston-Edwards, and Saloff-Coste (2021) were not intended to give good numerics.  We hope that comparing them to data will allow better choices of omitted constants as in item (3) above.  The following results are examples.

\item For any $N$,
\begin{align*}
P_{1,1,N-2}(123)=P_{1,1,N-2}(213)&=\frac12 P_{1,1,N-2}(\text{player 3 wins all})\\
&=\frac12\,\frac{N-2}{N}=\frac12\bigg(1-\frac2N\bigg).
\end{align*}
When $N=200$, the right side is 0.495.  Using Theorem~\ref{D,HE,SC-thm},
\begin{equation*}
P_{1,1,N-2}(123)\approx\frac12\sum_{y=1}^{N-1}\frac{(1-y/N)^3}{y^4}=\frac{\zeta(4)}{2}(1+o(1))\doteq 0.5412.
\end{equation*}
Similarly, if $1\le i<N/2$,
\begin{equation*}
P_{i,i,N-2i}(123)=P_{i,i,N-2i}(213)=\frac12\,\frac{N-2i}{N}=\frac12\bigg(1-\frac{2i}{N}\bigg),
\end{equation*}
confirming the scaling conjecture in this case.  So perhaps not all hope is lost for using Theorem~\ref{D,HE,SC-thm}.

\item Similarly, taking $x_1=x_2=1$,
\begin{align*}
P_{1,1,N-2}(132)&=P_{1,1,N-2}(231)\approx\frac12\sum_{y=1}^{N-1}\frac{(1-y/N)^2}{y^4}\,\frac{y}{N}\\
&=\frac{1}{2N}\sum_{y=1}^{N-1}\frac{(1-y/N)^2}{y^3}\sim\frac{\zeta(3)}{2N}.
\end{align*}
By Corollary~\ref{2/N}, these probabilities are asymptotic to $1/N$, so this estimate is off by a factor of $\zeta(3)/2\doteq0.6010$.
\end{enumerate}
\end{remarks}

\subsection{The independent chip model (ICM)}\label{subsec:ICM}

There are a variety of reasons for wanting to compute the chances of the various elimination orders.  The most classical one, ``The Problem of Points,'' has to do with splitting the capital in a $k$-player game when the game must be called off early.  This is one of the problems that got Fermat and Pascal in correspondence\,---\,the start of modern probability theory.  In tournament poker, we have seen three players decide to ``settle,'' dividing the final prize money in proportion to their current chip totals.  (As we will see, this is not the right way to do it.)  Of course, calculating expectations for various decisions (mentioned earlier) is a key application.

The independent chip model (ICM), a popular scheme, originated in a 1986 article by Mason Malmuth in \textit{Poker Player Newspaper}, which was reprinted in Malmuth (1987, 2004).  Although the name came later, the concept was used to argue that rebuying in a percentage-payback poker tournament is mathematically correct, contrary to conventional wisdom at the time.  Other implications of the ICM for poker tournaments were discussed by Gilbert (2009).  See Aguilar (2016) for its use in ``chopping'' the prize pool in poker tournaments, using a poker ICM calculator (ICMizer, 2020).

ICM builds on a solid foundation:  In the two-player gambler's ruin problem for fair coin-tossing, if player 1 starts with $A$ and player 2 starts with $B$, the chance that player 1 (respectively, player 2) wins all is $A/(A+B)$ (resp., $B/(A+B)$).  Now a heuristic step: Consider three players with initial capitals $A$, $B$, and $C$.  The chance that a given player wins all is (rigorously) proportional to his initial capital (so the chance that player 1 wins all is $A/N$, where $N:=A+B+C$).  The ICM calculation conditions on this, uses the relative initial capitals of the two nonwinners to calculate the chance of being second eliminated, \textit{and then multiplies}.  This results in the chances shown in Table~\ref{ICM-table} assigned to the six elimination orders.

\begin{table}[htb]
\caption{\label{ICM-table}The ICM with three players.}
\tabcolsep=2mm 
\begin{center}
\begin{tabular}{ccccccc}
\hline
\noalign{\smallskip}
$\sigma$ & 123 & 132 & 213 & 231 & 312 & 321  \\
\noalign{\smallskip}\hline
\noalign{\smallskip}
$P_{A,B,C}^{\text{ICM}}(\sigma)$ & $\frac{C}{N}\,\frac{B}{A+B}$ & $\frac{B}{N}\,\frac{C}{A+C}$ & $\frac{C}{N}\,\frac{A}{A+B}$ & $\frac{A}{N}\,\frac{C}{B+C}$ & $\frac{B}{N}\,\frac{A}{A+C}$ & $\frac{A}{N}\,\frac{B}{B+C}$ \\
\noalign{\smallskip}\hline
\noalign{\smallskip}
\end{tabular}
\end{center}
\end{table}

The probabilities for $k\ge4$ players are determined similarly.

We can now be more explicit about how the prize pool is chopped when the last three players decide to settle.  First, apportioning it in proportion to current chip totals does not take prize money into account and is unsupportable.  For example, in Example~\ref{wsop-example}, the chip leader would get about \$12.696\,M, more than he would get by finishing first, and the player in third place would get about \$2.626\,M, less than he would get by finishing third.  So let $P(\sigma)$ be the probability of elimination order $\sigma$.  Let $\alpha$. $\beta$, and $\gamma$ be the payouts for first, second, and third place.  Then the expression for the amounts apportioned to players 1, 2, and 3 should be
\begin{align*}
&(\gamma,\beta,\alpha)P(123)+(\gamma,\alpha,\beta)P(132)+(\beta,\gamma,\alpha)P(213)\\
&\qquad{}+(\alpha,\gamma,\beta)P(231)+(\beta,\alpha,\gamma)P(312)+(\alpha,\beta,\gamma)P(321).
\end{align*}
It is standard practice to use $P_{A,B,C}^{\text{ICM}}(\sigma)$ in place of $P(\sigma)$.  Alternatively, one could use $\bar P_{A,B,C}^{\text{GR}}(\sigma)$.  Of course, both are approximations.

\begin{example}
Let us return to Example~\ref{wsop-example} with $A=169$, $B=301$, and $C=817$; also $\alpha=\$10$\,M, $\beta=\$6$\,M, and $\gamma=\$4$\,M.  Using the ICM probabilities from Table~\ref{ICM-table}, we get $(\$5.325\,\text{M}, \$6.287\,\text{M}, \$8.388\,\text{M})$, which is consistent with ICMizer (2020), whereas using the interpolated gambler's ruin probabilities from Example~\ref{wsop-interp3}, we get $(\$5.270\,\text{M}, \$6.293\,\text{M}, \$8.437\,\text{M})$.  The player with third-largest chip total gets about \$54\,K more from an ICM chop than from a GR chop.

\end{example}

\begin{remarks}
\begin{enumerate}[(1)]

\item \textit{ICM is different from gambler's ruin}.  Consider $N=6$ and initial capital $A=1$, $B=2$, and $C=3$.  What is the chance the elimination order is 321?  Using the exact calculation in Figure~\ref{EliminationOrder} and the ICM formula yields
\begin{equation*}
P_{1,2,3}^{\text{GR}}(321)=\frac{569}{9456}\doteq0.06017, \qquad P_{1,2,3}^{\text{ICM}}(321)=\frac{1}{6}\,\frac{2}{5}\doteq0.06667.
\end{equation*}

\item \textit{The results can be of different orders of magnitude}.  With starting capitals $1,1,N-2$, 
\begin{equation*}
P_{1,1,N-2}^{\text{GR}}(321)\approx\frac{1}{N^3},\qquad P_{1,1,N-2}^{\text{ICM}}(321)=\frac{1}{N}\,\frac{1}{N-1}\sim\frac{1}{N^2}.
\end{equation*}

\item \textit{Sometimes they agree}.  With starting capitals $i,i,N-2i$, where $1\le i<N/2$,
\begin{equation*}
P_{i,i,N-2i}^{\text{GR}}(123)=P_{i,i,N-2i}^{\text{ICM}}(123)=\frac{N-2i}{N}\,\frac12=\frac12\bigg(1-\frac{2i}{N}\bigg).
\end{equation*}

\item \textit{They are often quite different}.  In the next section we calculate of the ratios
\begin{equation*}
P_{A,B,C}^{\text{GR}}(\sigma)/P_{A,B,C}^{\text{ICM}}(\sigma)
\end{equation*}
for all $A,B,C\ge1$ with $A+B+C=300$ and all $\sigma\in S_3$.  The ratios vary considerably, ranging from about 0.015 to about 1.15.

\item \textit{But} poker is a complicated game, particularly no limit where the bets can be arbitrary.  The gambler's ruin model is based on single-unit bets.  Why is this relevant?  Some variants of the $\pm1$ transfer have been studied.

\begin{itemize}
\item \textit{All in}:  After two players out of the remaining $k$ are chosen, if they have $A$ and $B$ respectively, the bet size is $\min(A,B)$.  The player with the smaller chip count is eliminated or doubles up.

\item \textit{Occasionally all in}:  This is a compromise between unit bets and all-in bets.  After two players out of the remaining $k$ are chosen, if they have $A$ and $B$ respectively, the bet size is chosen uniformly at random from $\{1,2,\ldots,\min(A,B)\}$.

\item \textit{Compulsive gambler} (Aldous, Lanoue, and Salez, 2015):  After two players out of the remaining $k$ are chosen, one gets the other's money with probabilities given by the two-player gambler's ruin formula.  That is, if the respective amounts are $A$ and $B$, the player with $A$ wins (and then has $A+B$) with probability $A/(A+B)$, or loses (and is eliminated) with probability $B/(A+B)$.
\end{itemize}

A fascinating effort at finding an optimal strategy for $k$-player gambler's ruin with all-in betting is in Ganzfried and Sandholm (2008). Interestingly, they use ICM as a starting evaluation of the value function and then sharpen this using fictitious play and value iteration.

These variants will (almost surely) result in different elimination order probabilities.  The ICM assignment is different yet again.  Thus, there are many distinct models.  It would be worthwhile to look at some of the available data for tournament poker and compare.  We wouldn't be surprised if all these models are inadequate.
\end{enumerate}
\end{remarks}\medskip

The next section salvages something from these differences, using the ratios and regression to give a useful approximation to the gambler's ruin probabilities.

To finish this section, let us note that ICM is well studied as the Plackett--Luce model.  This is a model allowing non-uniform distributions on $S_k$, the set of permutations of $k$ distinct items, labeled $1,2,\ldots,k$.  Each item $i$ is assigned a weight $w_i>0$ with $w_1+w_2+\cdots+w_k=w$.  Now imagine these weights placed in an urn and the weights removed sequentially, each time with probability proportional to its size among the remaining weights.  Thus,
\begin{equation*}
P(\sigma):=\frac{w_{\sigma(1)}}{w}\,\frac{w_{\sigma(2)}}{w-w_{\sigma(1)}}\,\frac{w_{\sigma(3)}}{w-w_{\sigma(1)}-w_{\sigma(2)}}\cdots.
\end{equation*}

The model was introduced in perception psychology by R. Duncan Luce (1959, 1977).  It has a variety of derivations: via the elimination by aspect axiom; as the distribution of the order statistics of independent exponential variables (the $i$th having mean $w_i$); and as the stationary distribution of the Tsetlin library.  See Diaconis (1988, pp.~174--175) for further references.

Later reinventions of the model were published by Harville (1973) and Plackett (1975), both of whom applied it to horse racing, and it seems to have a life of its own for this application (Stern, 2008).  There is good available code for fitting this model to data (Turner et al., 2017) and many applications.  Although the model is referred to in the literature as the Plackett--Luce model, perhaps Luce--Harville--Plackett--Malmuth would be chronologically more correct.

Finally, we note that enumerative combinatorics for the Plackett--Luce model can be interesting and challenging;  What is the approximate distribution of the number of fixed points or cycles, and how does it depend on the weights?

\section{ICM and regression for gambler's ruin}\label{sec:regression}

Here we show how to use the easy-to-compute ICM probabilities $P_{A,B,C}^{\text{ICM}}(\sigma)$ to get surprisingly good approximations to the gambler's ruin probabilities $P_{A,B.C}^{\text{GR}}(\sigma)$.  Throughout, we work with $k=3$ players, fair coin flips, and $\pm1$ transfers at each stage.

We can base the analysis on the $N=300$ data, which gives $P_{A,B,C}(123)$ (in double precision) for all $A,B,C\ge1$ with $A+B+C=300$.  There are $\binom{300-1}{2}=44{,}551$ such points.  Notice that, for $\sigma=\sigma(1)\,\sigma(2)\,\sigma(3)$,
\begin{equation*}
P_{A_1,A_2,A_3}(\sigma)=P_{A_{\sigma(1)},A_{\sigma(2)},A_{\sigma(3)}}(123),
\end{equation*}
so there is no loss of information by restricting to $\sigma=123$.  

As efficient as this data set is, there is still some redundancy in the data, as has already been alluded to in \eqref{P(3 wins)} and elsewhere, namely
\begin{align}\label{mart-identities}
P_{A,B,C}(123)+P_{A,B,C}(213)&=\frac{C}{A+B+C},\nonumber\\
P_{A,B,C}(132)+P_{A,B,C}(312)&=\frac{B}{A+B+C},\\
P_{A,B,C}(231)+P_{A,B,C}(321)&=\frac{A}{A+B+C},\nonumber
\end{align}
a consequence of the optional stopping theorem.  The result is that it suffices to consider only one of the two probabilities in each row of \eqref{mart-identities}.  Incidentally, the equations in \eqref{mart-identities} hold trivially with superscript ICM. 

We begin by evaluating, for $N=300$ and $\sigma=123$, the ratios
\begin{equation}\label{ratio123}
R_\sigma(A,B,C):=P_{A,B,C}^{\text{GR}}(\sigma)/P_{A,B,C}^{\text{ICM}}(\sigma)
\end{equation}
for all $A,B,C\ge1$ with $A+B+C=N$.  As already noted, there are $44{,}551$ such ratios and all of them belong to $(0.015,1.15)$.  The function $R_{123}$ is plotted in Figure~\ref{RatioPlot}.

\begin{figure}[htb]
\begin{center}
\includegraphics[width=2.25in]{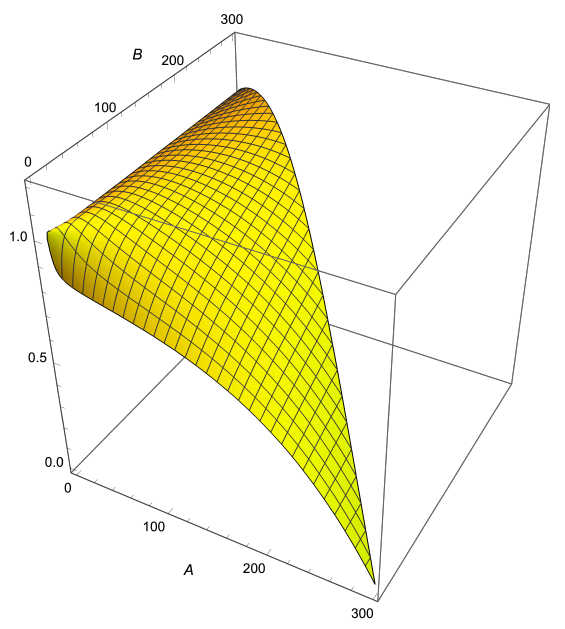}
\includegraphics[width=2.25in]{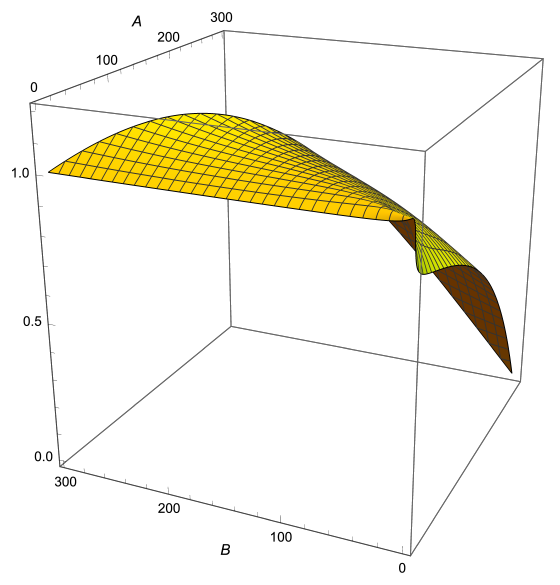}
\caption{\label{RatioPlot}A plot of $R_{123}$ defined in \eqref{ratio123} as a function of $(A,B)$ when $N=300$.  (The domain of $R_{123}$ is restricted to $A,B\ge1$ with $A+B\le N-1$.).  The second figure is a rotation of the first that reveals the singularity near $(1,1)$.}
\end{center}
\end{figure}

Notice that $R_{123}$ appears smooth as a function of $(A,B)$ ($C=N-A-B$), except for a singularity near $(1,1)$.  We can mitigate the effect of the singularity by  considering $R_{\sigma}$ over $1\le A\le B\le C$ with $A+B+C=N$ for $\sigma=213$, 312, and 321.  (The number of such triples $(A,B,C)$ is $N^2/12$ if $N$ is divisible by 6, hence 7500 if $N=300$.) In each case we fit a sextic polynomial in
\begin{equation*}
x:=\frac{A}{N}\quad\text{and}\quad y:=\frac{B}{N}
\end{equation*}
to the function $R_{\sigma}$.  A quadratic approximation does not give very good results, while a quartic approximation is quite good, and a sextic is even better.  At the same time, the higher the degree, the closer the design matrix is to being less than full rank.  An octic approximation results in some disturbingly large estimated regression coefficients, so we have settled on a sextic polynomial approximation.  Thus, we want to approximate $R_{\sigma}$ by the polynomial with 28 terms
\begin{equation*}
p_{\sigma}(x,y):=\sum_{i,j\ge0,\,i+j\le6}\beta_{ij}x^i y^j.
\end{equation*}

Let $\bm Y$ be the column vector of values of $R_{321}$ (with $N=300$), indexed by the vectors $(A,B,C)$ (with $1\le A\le B\le C$ and $A+B+C=N$) ordered lexicographically, let $\bm X$ be the matrix whose rows are indexed as the entries of $\bm Y$, and with row $(A,B,C)$ containing $1$, $x$, $y$, $x^2$, $xy$, $y^2$, $x^3$, $x^2 y$, \dots, $y^6$, where $x=A/N$ and $y=B/N$.  Note that $\bm Y$ has length 7500 and $\bm X$ is 7500 by 28.  To quantify the claim that $\bm X'\bm X$ becomes closer to being singular as the degree of the approximating polynomial increases, we note that, with $N=300$, $\det(\bm X'\bm X)$ is $1.68\times10^6$ for quadratic approximation, $3.90\times10^{-29}$ for quartic, $1.14\times10^{-136}$ for sextic, and $1.10\times10^{-415}$ for octic.

The estimated regression coefficients are
\begin{equation*}
\hat{\bm\beta}=(\bm X'\bm X)^{-1}\bm X'\bm Y,
\end{equation*}
and the values of the fitted polynomial $\hat p_{321}$ are the entries of $\bm X\hat{\bm\beta}$.  Table~\ref{coeffs} lists the estimated regression coefficients, and the error sum of squares is $7.86\times10^{-9}$ for $\sigma=321$, $8.95\times10^{-9}$ for $\sigma=312$, and 0.0177 for $\sigma=213$.  Additional detail is given in the supplementary materials (Section~\ref{suppl}).

This gives the approximation 
\begin{equation}\label{phat}
\hat P_{A,B,C}^{\text{GR}}(321):=P_{A,B,C}^{\text{ICM}}(321)\;\hat p_{321}\bigg(\frac{A}{N},\frac{B}{N}\bigg)=
\frac{A}{N}\,\frac{B}{B+C}\;\hat p_{321}\bigg(\frac{A}{N},\frac{B}{N}\bigg),
\end{equation}
and the cases $\sigma=312$ and $\sigma=213$ are treated in the same way.  The derivation assumed $N=300$ throughout.  We did the same computation for $N=200$, and the estimated regression coefficients did not change much, indicating stability.   We expect the approximation to be reasonable for other (perhaps much larger) values of $N$.  That is, for general $N$ use the approximation \eqref{phat} in which the function $\hat p_{321}$ is determined by the coefficients in Table~\ref{coeffs} computed from the $N=300$ data.  We investigate this in two examples below.

\begin{example}\label{regression-accuracy}
Table~\ref{exact-regression} compares exact values of $P_{A,B,C}(\sigma)$ with its interpolation approximation $\bar P_{A,B,C}(\sigma)$ and its regression-corrected ICM $\hat P_{A,B,C}(\sigma)$.  In the two examples, which are representative, we find that, for $\sigma=321$ and $\sigma=312$ (and their ``complements'' $\sigma=231$ and $\sigma=132$), the regression approximation is often accurate to six significant digits (except near the boundary of $\mathscr{X}$).  But with $\sigma=213$ (and $\sigma=123$) the regression approximation is not as good, perhaps only three or four significant digits.  In the latter case, we see from Table~\ref{coeffs} that the estimated regression coefficients are substantially larger, which is indicative of a poorer fit.  On the other hand, the interpolation approximation is typically accurate to four or five decimal places.

\begin{table}[H]
\caption{\label{coeffs}The estimated regression coefficients in fitting a sextic polynomial in $x:=A/N$ and $y:=B/N$ to $P_{A,B,C}^{\text{GR}}(\sigma)/P_{A,B,C}^{\text{ICM}}(\sigma)$, when $1\le A\le B\le C$ and $A+B+C=N$.  Here $N=300$.}
\tabcolsep=2mm 
\catcode`@=\active \def@{\hphantom{0}}
\catcode`#=\active \def#{\hphantom{$-$}}
\begin{small}
\begin{center}
\begin{tabular}{cccc}
\hline
\noalign{\smallskip}
& $\sigma=321$ & $\sigma=312$ & $\sigma=213$ \\
\noalign{\smallskip}\hline
\noalign{\smallskip}
$\hat\beta_{00}$ & #0.00000716459 & $-0.00000434510$ & #@@@0.951694   \\
$\hat\beta_{10}$ & #2.27836@@@      & #@2.27978@@      & #@@@7.07267@   \\
$\hat\beta_{01}$ & #2.28007@@@      & #@2.28028@@      & @@@$-3.74069$@ \\
$\hat\beta_{20}$ & $-2.25895$@@@    & #@0.0295644      & @@$-85.5467$@@ \\
$\hat\beta_{11}$ & $-2.24587$@@@    & @$-2.30603$@@    & @@$-23.1325$@@ \\
$\hat\beta_{02}$ & $-0.00740617$    & @$-2.28407$@@    & #@@37.7612@@   \\
$\hat\beta_{30}$ & #0.0793010@      & @$-0.618044$@    & #@336.603@@@   \\
$\hat\beta_{21}$ & $-0.581954$@@    & #@0.285484@      & #@646.261@@@.  \\
$\hat\beta_{12}$ & $-0.191389$@@    & #@0.241672@      & @$-111.109$@@@ \\
$\hat\beta_{03}$ & #0.0630814@      & #@0.0171583      & @$-197.597$@@@ \\
$\hat\beta_{40}$ & #1.48069@@@      & #@0.533492@      & @$-557.212$@@@ \\
$\hat\beta_{31}$ & #7.41061@@@      & @$-3.71135$@@    & $-2327.47$@@@@ \\
$\hat\beta_{22}$ & #3.41508@@@      & @$-4.35883$@@    & $-1723.69$@@@@ \\
$\hat\beta_{13}$ & $-5.89118$@@@    & #@5.79382@@      & #@874.263@@@   \\
$\hat\beta_{04}$ & $-2.67189$@@@    & #@2.49997@@      & #@540.005@@@   \\
$\hat\beta_{50}$ & $-0.556140$@@    & @$-4.34151$@@    & #@401.498@@@   \\
$\hat\beta_{41}$ & $-3.73983$@@@    & $-14.9318$@@@    & #2829.27@@@@   \\
$\hat\beta_{32}$ & $-5.73258$@@@    & #14.4755@@@      & #5167.69@@@@   \\
$\hat\beta_{23}$ & #0.0461851@      & #13.5284@@@      & #1655.90@@@@   \\
$\hat\beta_{14}$ & #1.48366@@@      & @$-7.28960$@@    & $-1925.03$@@@@ \\
$\hat\beta_{05}$ & $-0.274686$@@    & @$-2.71321$@@    & @$-746.323$@@@ \\
$\hat\beta_{60}$ & $-0.0378116$@    & #@2.48372@@      & @$-100.681$@@@ \\
$\hat\beta_{51}$ & $-1.05892$@@@    & #21.5551@@@      & $-1101.29$@@@@ \\
$\hat\beta_{42}$ & $-4.84129$@@@    & #@6.48502@@      & $-3408.10$@@@@ \\
$\hat\beta_{33}$ & $-5.77799$@@@    & $-35.4065$@@@    & $-3643.90$@@@@  \\
$\hat\beta_{24}$ & #1.08693@@@      & $-12.3409$@@@    & @$-213.799$@@@ \\
$\hat\beta_{15}$ & #5.19094@@@      & #@5.44466@@      & #1420.66@@@@   \\
$\hat\beta_{06}$ & #1.88817@@@      & #@1.07997@@      & #@410.303@@@   \\
\noalign{\smallskip}\hline
\noalign{\smallskip}
\end{tabular}
\end{center}
\end{small}
\end{table}

\begin{example}[Example \ref{wsop-interp3} continued]\label{wsop-regression}
Recall that, in Example~\ref{wsop-interp3}, we estimated $P_{A,B,C}(\sigma)$ when $A=169$, $B=301$, and $C=817$.  We did so using linear interpolation based on the $N=300$ data.  Results are restated in Table~\ref{approx-poker} (row~(c)), so that we can compare them with the ICM (row~(a)), Monte Carlo (row (b)), and the regression approximation (row~(d)), which used \eqref{phat} (and its analogues for $\sigma=312$ and $\sigma=213$) with $A$, $B$, and $C$ as above and $N=1287$.

We find that linear interpolation and linear regression match to four or more decimal places, Monte Carlo to three, and ICM to one or two.  
\end{example}

\begin{table}[H]
\caption{\label{exact-regression}Two examples comparing the exact value of $P_{A,B,C}(\sigma)$ (to six significant digits) with its interpolation approximation $\bar P_{A,B,C}(\sigma)$ and its regression approximation $\hat P_{A,B,C}(\sigma)$.}
\tabcolsep=1.2mm 
\catcode`@=\active \def@{\hphantom{0}}
\begin{center}
\begin{tabular}{lllllll}
\hline
\noalign{\smallskip}
\phantom{000}$\sigma$ & @@123 & @@132 & @@213 & @@231 & @@312 & @@321  \\
\noalign{\smallskip}
\hline
\noalign{\smallskip}  
$P_{23,45,67}(\sigma)$      & 0.342769 & 0.264802 & 0.153527 & 0.108430 & 0.0685310 & 0.0619406 \\
\noalign{\smallskip}
$\bar P_{23,45,67}(\sigma)$ & 0.342763 & 0.264801 & 0.153533 & 0.108430 & 0.0685326 & 0.0619404 \\
\noalign{\smallskip}
$\hat P_{23,45,67}(\sigma)$ & 0.342744 & 0.264802 & 0.153552 & 0.108430 & 0.0685311 & 0.0619404 \\
\noalign{\smallskip}\hline
\noalign{\smallskip}  
$P_{10,40,90}(\sigma)$      & 0.532690 & 0.268542 & 0.110167 & 0.0553389 & 0.0171721 & 0.0160897 \\
\noalign{\smallskip}
$\bar P_{10,40,90}(\sigma)$ & 0.532702 & 0.268540 & 0.110155 & 0.0553369 & 0.0171744 & 0.0160917 \\
\noalign{\smallskip}
$\hat P_{10,40,90}(\sigma)$ & 0.532773 & 0.268542 & 0.110084 & 0.0553389 & 0.0171721 & 0.0160897 \\
\noalign{\smallskip}
\hline
\end{tabular}                           
\end{center}
\end{table}
\end{example} 

\begin{table}[H]
\caption{\label{approx-poker}Approximations to $P_{A,B,C}^{\text{GR}}(\sigma)$ when $A=169$, $B=301$, and $C=817$.  Row (a) uses ICM, row (b) uses Monte Carlo, row (c) uses linear interpolation, row (d) uses linear regression, and row (e) is exact (see the last paragraph of Subsection~\ref{subsec:exact}).  All figures are rounded to the degree shown.}
\catcode`@=\active \def@{\hphantom{0}}
\tabcolsep=.6mm 
\begin{center}
\begin{tabular}{lccccccc}
\hline
\noalign{\smallskip}
& $\sigma$ & 123 & 132 & 213 & 231 & 312 & 321  \\
\noalign{\smallskip}
\hline
\noalign{\smallskip}
(a) & $P_{A,B,C}^{\text{ICM}}(\sigma)$     & 0.406548@ & 0.193791@ & 0.228261@ & 0.095960@ & 0.0400865@ & 0.0353535@ \\
\noalign{\smallskip}
(b) & $\widehat P_{A,B,C}^{\text{GR}}(\sigma)$ & 0.419345@ & 0.207492@ & 0.215650@ & 0.106286@ & 0.0261205@ & 0.0251065@ \\
\noalign{\smallskip}
(c) & $\bar P_{A,B,C}^{\text{GR}}(\sigma)$ & 0.419603@ & 0.207878@ & 0.215207@ & 0.106174@  & 0.0259991@ & 0.0251395@ \\
\noalign{\smallskip}
(d) & $\hat P_{A,B,C}^{\text{GR}}(\sigma)$ & 0.419635@ & 0.207879@ & 0.215175@ & 0.106174@ & 0.0259984@ & 0.0251388@ \\
\noalign{\smallskip}
(e) & $P_{A,B,C}^{\text{GR}}(\sigma)$ & 0.4195973 & 0.2078788 & 0.2152123 & 0.1061744 & 0.02599843 & 0.02513876 \\
\noalign{\smallskip}
\hline
\end{tabular}
\end{center}
\end{table}

\newpage

\section{A conjecture and more than three players}\label{conjecture}

This section treats two further topics, the scaling conjecture and $k\ge4$ players (in particular, $k=4$).

\subsection{Scaling conjecture}\label{subsec:scaling}

The \textit{scaling conjecture} says, for all $A,B,C\ge1$, $\sigma\in S_3$, and $n\ge2$,
\begin{equation}\label{scaling-conj}
P_{nA,nB,nC}(\sigma)\doteq P_{A,B,C}(\sigma).
\end{equation}
As noted in Section~\ref{sec:intro}, this is closely related to the result, provable as a consequence of Donsker's theorem, that $\lim_{n\to\infty}P_{nA,nB,nC}(\sigma)$ exists and can be expressed in terms of standard two-dimensional Brownian motion. 

To formulate such a theorem, we adopt the setup used by Ferguson (1995).  Let $\Delta$ be the equilateral triangle with vertices $(-1,0)$, $(1,0)$, and $(0,\sqrt{3})$, and let $V_3$ be the edge that lies on the $x$-axis.  Let $A$, $B$, and $C$ be positive integers and $N:=A+B+C$.  Then the barycentric coordinates $(A/N,B/N,C/N)$ correspond to the initial state $\bm x:=((B-A)/N,\sqrt{3}\,C/N)$.

\begin{theorem}\label{Brownian-approx}
Let $\{\bm B(t),\;t\ge0\}$ be standard two-dimensional Brownian motion, and let $T_1$ be the exit time of $\bm x+\bm B$ from $\Delta$.  Then
\begin{equation}\label{Ferguson-limit}
\lim_{n\to\infty}[P_{nA,nB,nC}(321)+P_{nA,nB,nC}(312)]=P(\bm x+\bm B(T_1)\in V_3).
\end{equation}
Furthermore,
\begin{equation}\label{Hajek-limit}
\lim_{n\to\infty}P_{nA,nB,nC}(321)=E\bigg(\frac{|\bm x+\bm B(T_1)-(1,0)|}{2};\;\bm x+\bm B(T_1)\in V_3\bigg).
\end{equation}
\end{theorem}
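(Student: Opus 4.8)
\emph{Proof proposal.} The idea is to realize the discrete three‑player walk as a rescaled lattice walk converging to planar Brownian motion run in $\Delta$, to express both elimination‑order probabilities as expectations of bounded functionals of the first exit position from $\Delta$, and to pass to the limit by the continuous mapping theorem. First, the increment of the capital vector $(X_1,X_2,X_3)$ at each round is one of the six vectors $\pm(e_i-e_j)$, $i\ne j$, each with probability $1/6$; these increments are i.i.d., bounded, mean zero, and an elementary computation gives $\mathrm{Var}(\Delta X_i)=2/3$ and $\mathrm{Cov}(\Delta X_i,\Delta X_j)=-1/3$ ($i\ne j$). The affine map $\Psi\colon(a,b,c)\mapsto(b-a,\sqrt3\,c)$ carries the standard simplex onto $\Delta$, with $(1,0,0),(0,1,0),(0,0,1)$ going to the vertices $(-1,0),(1,0),(0,\sqrt3)$, and a short calculation shows that the increments of $\Psi(X)$ have covariance matrix $2I$. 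Hence, by the multivariate invariance principle, the process $t\mapsto\frac1{nN}\Psi\big(X_{\lfloor (nN)^2 t\rfloor}\big)$ converges in distribution in $C([0,\infty),\mathbb R^2)$ to a time‑changed planar Brownian motion started at $\Psi(A/N,B/N,C/N)=\bm x$; since the time change is deterministic it does not affect exit positions, so for our purposes the limit may be taken to be standard planar Brownian motion from $\bm x$. Note the rescaled starting point equals $\bm x$ for \emph{every} $n$, so no separate convergence of initial conditions is needed.

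\emph{Convergence of the exit position.} Let $\bm Z_n$ denote the $\Psi$‑rescaled position of the $n$th walk at its first elimination time $T_1^{(n)}$, equivalently its first exit from $\Delta$. Because $\Delta$ is a bounded convex (hence Lipschitz) domain, every boundary point is regular, and by the classical theory the first‑exit functional $\omega\mapsto\omega(\tau_\Delta(\omega))$ on $C([0,\infty),\mathbb R^2)$ is continuous at $\mathbb P^{\mathrm{BM}}_{\bm x}$‑almost every path; moreover planar Brownian motion started in the interior a.s.\ exits through the relative interior of one of the three edges, since each of the three corners is polar. Combining this with the invariance principle and the continuous mapping theorem gives $\bm Z_n\Rightarrow\bm x+\bm B(T_1)$, the Brownian exit position, whose law is two‑dimensional harmonic measure on $\partial\Delta$.

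\emph{From exit positions to elimination probabilities.} The event that player $3$ is eliminated first is precisely $\{\bm Z_n\in V_3\}$, so the left side of \eqref{Ferguson-limit} equals $P(\bm Z_n\in V_3)$; since the only discontinuities of $\mathbf 1_{V_3}$ on $\partial\Delta$ are the two endpoints of $V_3$, which carry no harmonic‑measure mass, the portmanteau theorem yields \eqref{Ferguson-limit}. For \eqref{Hajek-limit}, apply the strong Markov property of the discrete chain at $T_1^{(n)}$: on $\{\bm Z_n=(\xi,0)\in V_3\}$ the surviving players $1$ and $2$ hold $\tfrac{nN}{2}(1-\xi)$ and $\tfrac{nN}{2}(1+\xi)$ units, and the remaining play is an ordinary fair two‑player gambler's ruin, so by the classical formula recalled at \eqref{P(3 wins)} the conditional probability of elimination order $321$ (i.e.\ of player $1$ surviving) is $\tfrac12(1-\xi)=\tfrac12\,|(\xi,0)-(1,0)|$. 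Therefore
\begin{equation*}
P_{nA,nB,nC}(321)=E\Big[\tfrac12\,\big|\bm Z_n-(1,0)\big|\;;\;\bm Z_n\in V_3\Big]=E\big[g(\bm Z_n)\big],\qquad g(\bm z):=\tfrac12\,|\bm z-(1,0)|\,\mathbf 1_{V_3}(\bm z),
\end{equation*}
and $g$ is bounded and a.s.\ continuous at $\bm x+\bm B(T_1)$ (for the same reason as $\mathbf 1_{V_3}$), so a final application of the continuous mapping theorem gives \eqref{Hajek-limit}.

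\emph{Main obstacle.} The covariance computation and the two‑player reduction are routine; the real content is pushing the invariance principle through the exit‑position functional, which is not globally continuous. This rests on the regularity of $\partial\Delta$ for Brownian motion, on the a.s.\ absence of exits at the three corners, and on the observation that the triangular‑lattice walk cannot ``jump over'' the boundary of $\Delta$ without landing on it; assembling these into the statement $\bm Z_n\Rightarrow\bm x+\bm B(T_1)$ is where care is required, and one may alternatively phrase the whole argument as the convergence of discrete harmonic measure on the lattice triangle to continuous harmonic measure on $\Delta$.
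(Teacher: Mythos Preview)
Your argument is correct and is precisely a fleshing-out of what the paper itself offers: the paper does not prove Theorem~\ref{Brownian-approx} in detail but only states that it is ``provable as a consequence of Donsker's theorem,'' explains the integrand in \eqref{Hajek-limit} as the two-player gambler's-ruin probability applied at the exit position, and points to Denisov and Wachtel (2015) for convergence of first-hitting probabilities. Your covariance computation for the increments, the affine embedding $\Psi$ yielding an isotropic limit (up to a deterministic time change irrelevant for exit positions), and the continuous-mapping step through boundary regularity and polarity of the corners are exactly the details the paper leaves to the reader, and your ``main obstacle'' paragraph correctly isolates the one nontrivial step.
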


The integrand in \eqref{Hajek-limit} is the proportion of the length of the edge $V_3$ that lies between the exit position $\bm x+\bm B(T_1)$ and the corner $(1,0)$ corresponding to player 2 winning all.  This amounts to applying the two-player gambler's ruin formula to the exit position.

Ferguson (1995) (see also Bruss, Louchard, and Turner, 2003) and Hajek (1987) used conformal mapping to give complicated expressions for the right sides of \eqref{Ferguson-limit} and \eqref{Hajek-limit}, respectively.  It remains to massage their formulas into computable form.  In a special case this can easily be done for Ferguson's formula.  If the initial state $(A,B,C)$ satisfies $A=B$, or equivalently, if the initial state in barycentric coordinates has the form $(a,a,1-2a)$, then the \textit{Mathematica} function defined in Figure~\ref{Ferguson-formula} gives the exit probability in \eqref{Ferguson-limit}.

\begin{figure}[htb]
\begin{center}
\includegraphics[width=4in]{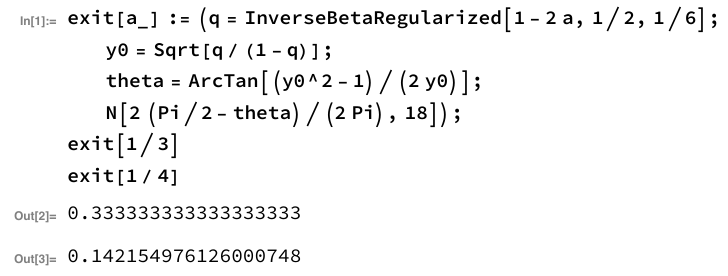}
\caption{\label{Ferguson-formula}\textit{Mathematica} code for the right side of \eqref{Ferguson-limit} when the initial state, in barycentric coordinates, is $(a,a,1-2a)$.}
\end{center}
\end{figure}

Ferguson (1995) showed that standard two-dimensional Brownian motion starting at $(0,\sqrt{3}/2)$ exits the equilateral triangle with vertices $(-1,0)$, $(1,0)$, and $(0,\sqrt{3})$ along the $x$-axis with probability about 0.1421.  Now $(0,\sqrt{3}/2)$ has barycentric coordinates $(\frac14,\frac14,\frac12)$, so Figure~\ref{Ferguson-formula} shows that Ferguson's probability, evaluated to 12 decimal places, is 0.142154976126.  On the other hand, the corresponding gambler's ruin probability with $N=300$ is
\begin{align}\label{Ferguson-approx}
&P_{75,75,150}(\text{player 3 goes broke first})\nonumber\\
&\qquad\qquad{}=P_{75,75,150}(312)+P_{75,75,150}(321)\doteq0.142154976161,
\end{align}
which we have computed to 18 decimal places, the first ten of which agree with Ferguson's number!

In support of the scaling conjecture we present evidence in Table~\ref{scaling}.  We have looked at many other examples.  Scaling to good approximation seems to hold always.

\begin{table}[htb]
\caption{\label{scaling}$P_{A,B,C}(\sigma)$ for $(A,B,C)=(2n,3n,5n)$ ($1\le n\le15$ and $n=20, 25, 30$), rounded to 12 significant digits, in support of the scaling conjecture.  Here we include only three choices of $\sigma$.  Results for the others can be deduced from \eqref{mart-identities}.}
\tabcolsep=2mm 
\catcode`@=\active \def@{\hphantom{0}}
\begin{center}
\begin{tabular}{lcccccc}
\hline
\noalign{\smallskip}
$A,B,C$ & $\sigma=213$ & $\sigma=312$ & $\sigma=321$ \\
\noalign{\smallskip}\hline
\noalign{\smallskip}
$2,3,5$    & 0.190419015064 & 0.0704242611225 & 0.0662121426098 \\
$4,6,10$   & 0.190374670083 & 0.0704067672263 & 0.0662043067857 \\
$6,9,15$   & 0.190371967724 & 0.0704057817695 & 0.0662038677034 \\
$8,12,20$  & 0.190371502992 & 0.0704056143412 & 0.0662037932082 \\
$10,15,25$ & 0.190371375036 & 0.0704055684270 & 0.0662037727906 \\
$12,18,30$ & 0.190371328913 & 0.0704055519070 & 0.0662037654463 \\
$14,21,35$ & 0.190371309103 & 0.0704055448186 & 0.0662037622955 \\
$16,24,40$ & 0.190371299477 & 0.0704055413763 & 0.0662037607656 \\
$18,27,45$ & 0.190371294349 & 0.0704055395436 & 0.0662037599511 \\
$20,30,50$ & 0.190371291418 & 0.0704055384960 & 0.0662037594855 \\
$22,33,55$ & 0.190371289645 & 0.0704055378624 & 0.0662037592040 \\
$24,36,60$ & 0.190371288521 & 0.0704055374610 & 0.0662037590256 \\
$26,39,65$ & 0.190371287781 & 0.0704055371968 & 0.0662037589082 \\
$28,42,70$ & 0.190371287279 & 0.0704055370172 & 0.0662037588284 \\
$30,45,75$ & 0.190371286927 & 0.0704055368917 & 0.0662037587726 \\
\noalign{\smallskip}            
$40,60,100$& 0.190371286171 & 0.0704055366216 & 0.0662037586526 \\
$50,75,125$& 0.190371285964 & 0.0704055365478 & 0.0662037586198 \\
$60,90,150$& 0.190371285890 & 0.0704055365213 & 0.0662037586080 \\
\noalign{\smallskip}\hline
\noalign{\smallskip}
\end{tabular}
\end{center}
\end{table}

A second piece of evidence comes from
\begin{equation}\label{exact P(123)}
P_{i,i,N-2i}(123)=P_{i,i,N-2i}(213)=\frac12\bigg(1-\frac{2i}{N}\bigg),\qquad 1\le i<N/2.
\end{equation}
These are \textit{exactly} invariant under scaling.  Indeed, they match the ICM.

A third piece of evidence comes from the Brownian motion approximation of the random walk.  As we have already seen for $k=3$, the gambler's ruin walk converges to Brownian motion on the $k$-simplex (Denisov and Wachtel, 2015).  It follows that the first hitting probabilities converge to those of Brownian motion.  Finally, the Brownian motion extinction probabilities are scale invariant via properties of Brownian motion. 

A fourth piece of evidence comes from the asymptotic approximation \eqref{D,HE,SC-eq} above.  This is (approximately) scale invariant.

The rapid convergence of rescaled probabilities (as seen in Table~\ref{scaling}) is surprising.  Theorem~\ref{Brownian-approx} shows that these approach limits expressible in terms of standard two-dimensional Brownian motion.  We might denote the limit of $P_{nA,nB,nC}(\sigma)$ as $n\to\infty$ by $P_{A,B,C}^{\text{BM}}(\sigma)$, where the superscript refers to Brownian motion.  For example, if $\sigma=321$, this limit is given by \eqref{Hajek-limit}.  Usually, Gaussian approximation of features of random walk converge at rate $1/\sqrt{N}$.  The numerics would be explained by the following conjecture, which may be regarded as a more precise version of the scaling conjecture \eqref{scaling-conj}.

\begin{conjecture}
$(a)$ For each $A,B,C\ge1$, $\sigma\in S_3$, and $n\ge2$,
\begin{equation*}
|P_{nA,nB,nC}(\sigma)-P_{A,B,C}(\sigma)|<0.0004.
\end{equation*}
$(b)$ For each $A,B,C\ge1$ and $\sigma\in S_3$, let $N:=A+B+C$.  Then
\begin{equation*}
|P_{nA,nB,nC}(\sigma)-P_{A,B,C}^{{\rm BM}}(\sigma)|=O\bigg(\frac{1}{(nN)^4}\bigg)\quad\text{as }n\to\infty.
\end{equation*}
\end{conjecture}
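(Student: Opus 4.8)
The natural plan is to reduce both parts to a single ingredient: a sharp convergence rate for the discrete harmonic measure of the triangular-lattice walk to the harmonic measure of Brownian motion on the equilateral triangle $\Delta$. Rescale so the level-$n$ walk lives on the lattice of spacing $h:=1/(nN)$ inside $\Delta$, with $N:=A+B+C$. Fix $\sigma$ and let $g_\sigma$ be the boundary data representing $P(\sigma)$: on the one edge of $\Delta$ through which player $\sigma(1)$ must exit first it equals the two-player ruin weight (a continuous piecewise-linear function), and it is $0$ on the other two edges; thus $g_\sigma$ is discontinuous only at the two corners where its active edge meets the others. Let $u$ be the (Perron) harmonic extension of $g_\sigma$ and $u_h$ the discrete harmonic function with boundary values $g_\sigma$. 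Since from an interior state the walk first meets $\partial\Delta$ at a non-vertex edge point and the two-player ruin formula applies exactly there, $P_{nA,nB,nC}(\sigma)=u_h(\bm x)$ exactly, where $\bm x$ is the lattice point with barycentric coordinates $(A/N,B/N,C/N)$ (a lattice point at every level), while $P^{\mathrm{BM}}_{A,B,C}(\sigma)=u(\bm x)$ by Theorem~\ref{Brownian-approx}. Everything then reduces to bounding $u_h(\bm x)-u(\bm x)$.

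The heart of the matter is an arithmetic feature of the triangular lattice. Writing $\Delta_h f:=\frac16\sum_{j=0}^5 f(\cdot+he_j)-f$ ($e_j$ the six sixth roots of unity), a Taylor expansion together with $\frac16\sum_j(e_j\cdot\nabla)^m=0$ for odd $m$ gives $\Delta_h f=\sum_{p\ge1}c_p h^{2p}\,\frac16\sum_j(e_j\cdot\nabla)^{2p}f$; crucially $\frac16\sum_j(e_j\cdot\nabla)^2$ is a multiple of $\Delta$ and $\frac16\sum_j(e_j\cdot\nabla)^4$ is a multiple of $\Delta^2$, the first genuinely anisotropic term occurring only at order $h^6$. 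Hence \emph{for harmonic $u$} the $h^2$ and $h^4$ terms vanish and, writing $u=\operatorname{Re}F$ with $F$ holomorphic, $\Delta_h u=c\,h^6\operatorname{Re}F^{(6)}+O(h^8)$. Away from the two ``bad'' corners $u$ is real-analytic up to $\partial\Delta$ (odd reflection across the straight open edges; the two corners carrying zero data are removable because $\pi/3$ divides $\pi$), so $|\Delta_h u|=O(h^6)$ there; near a bad corner $u$ has a $\theta$-type singularity, giving $|\Delta_h u(\bm y)|\le C\,h^6(d(\bm y)+h)^{-6}$ with $d(\bm y)$ the distance to the nearest bad corner. Now $\Delta_h(u_h-u)=-\Delta_h u$ in the interior with zero boundary values, so $u_h(\bm x)-u(\bm x)=-\sum_{\bm y}G_h(\bm x,\bm y)\,\Delta_h u(\bm y)$ where $G_h(\bm x,\cdot)$ is the expected number of visits before exit, with $\sum_{\bm y}G_h(\bm x,\bm y)=E_{\bm x}[T_1]=3(nA)(nB)(nC)/(nN)=O(h^{-2})$. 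The bulk contribution is thus $O(h^6)\cdot O(h^{-2})=O(h^4)=O((nN)^{-4})$, the rate claimed in $(b)$.

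It remains to check the two corner neighborhoods do not spoil this, which requires a discrete harmonic-measure estimate: for $\bm y$ within a fixed distance of a corner of opening $\pi/3$, $G_h(\bm x,\bm y)\le C\,d(\bm y)^{\alpha}$ for some $\alpha>2$ (the sharp exponent is $\pi/(\pi/3)=3$; anything above $2$ suffices), expressing the fact that the walk spends little time deep inside such a sharp absorbing corner. Granting this, $\sum_{d(\bm y)<\delta}G_h(\bm x,\bm y)(d(\bm y)+h)^{-6}\le C\int_0^\delta r^{\alpha+1}(r+h)^{-6}\,dr=O(h^{\alpha-4})=O(h^{-2})$, so the corner contribution is again $O(h^4)$, completing $(b)$. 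I expect this uniform-in-$h$ corner Green's-function bound\,---\,immediate heuristically from the known behavior of random walk in cones but needing an honest discrete barrier/supersolution argument\,---\,to be the main technical obstacle.

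For $(a)$, the same analysis gives $|P_{mA,mB,mC}(\sigma)-P^{\mathrm{BM}}_{A,B,C}(\sigma)|\le C_1\,ABC\,N^{-7}m^{-4}$ with $C_1$ an \emph{explicit} constant built from $\sup_{\Delta}|F^{(6)}_\sigma|$ (corner-singular part included), the lattice coefficient $c$, and the Green's-function constants. By the triangle inequality $|P_{nA,nB,nC}(\sigma)-P_{A,B,C}(\sigma)|\le C_1\,ABC\,N^{-7}(n^{-4}+1)\le 2C_1\,ABC\,N^{-7}\le 2C_1/(27N^4)\le 2C_1/2187$, using $ABC\le(N/3)^3$ and $N\ge3$. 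So $(a)$ reduces to verifying $C_1<0.43$ by making every constant quantitative, plus a finite exact check of the (finitely many) cases with $nN$ below the threshold where the Taylor argument becomes valid. That part is laborious rather than deep; the real difficulty is, again, a quantitative handle on the corner contributions.
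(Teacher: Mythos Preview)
First, note that the statement is presented in the paper as a \emph{conjecture}, not a theorem: the authors offer no proof, only numerical support (the data in Table~\ref{scaling}, the ten-decimal agreement in~\eqref{Ferguson-approx}, and the remark that differences as large as $0.000383$ have been observed for part~(a)). There is thus no proof in the paper to compare yours against; what follows evaluates your sketch on its own merits.

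Your triangular-lattice Taylor analysis is correct and is the right engine. The sixfold symmetry forces $\frac{1}{6}\sum_j(e_j\cdot\nabla)^{2p}$ to be a real linear combination of operators $\Delta^{k}\,\operatorname{Re}(\partial_x+i\partial_y)^{6m}$ with $k+3m=p$, so on a harmonic $u=\operatorname{Re}F$ every term containing a $\Delta$ factor vanishes; in particular not only the $h^2$ and $h^4$ terms but also the $h^8$ and $h^{10}$ terms drop out, and one gets $\Delta_h u=c\,h^6\operatorname{Re}F^{(6)}+O(h^{12})$ wherever $u$ is analytic. The bulk estimate $O(h^4)$ then follows from $\sum_{\bm y}G_h(\bm x,\bm y)=O(h^{-2})$ exactly as you say.

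The genuine gap is in the corner step. Your sum-to-integral conversion drops the two-dimensional lattice Jacobian: one has $\sum_{d(\bm y)<\delta}f(d(\bm y))$ of order $h^{-2}\int_0^\delta f(r)\,r\,dr$, not $\int_0^\delta f(r)\,r\,dr$. With the crude bounds $G_h\le C\,d^{\alpha}$ and $|\Delta_h u|\le C\,h^6(d+h)^{-6}$, the honest estimate for the corner contribution to $u_h-u$ is therefore
\[
h^6\cdot h^{-2}\int_0^\delta r^{\alpha+1}(r+h)^{-6}\,dr\;=\;O(h^{\alpha}),
\]
which is only $O(h^3)$ at the sharp wedge exponent $\alpha=\pi/(\pi/3)=3$; your displayed $O(h^4)$ is an artifact of the missing $h^{-2}$. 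The reason the conjectured $(nN)^{-4}$ rate nevertheless matches the numerics is an angular cancellation that a pointwise bound cannot see: near the (single) discontinuous vertex the leading singular part of $u$ is $1-3\theta/\pi$, giving $\operatorname{Re}F^{(6)}\propto r^{-6}\sin 6\theta$, while $G_h(\bm x,\cdot)$, being discrete-harmonic and zero on both edges of the $\pi/3$ wedge, has leading behaviour $\propto r^{3}\sin 3\theta$. Since $\int_0^{\pi/3}\sin 3\theta\,\sin 6\theta\,d\theta=0$, the dominant pairing vanishes and the first surviving mode ($r^{6}\sin 6\theta$ in the Green's function) yields $O(h^4)$. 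A rigorous proof of~(b) along your lines must capture this orthogonality \emph{and} show that the discrete corrections to $G_h$ near the vertex do not destroy it; that is substantially more than the barrier estimate you outline, and is where the real difficulty lies. (Two minor corrections: only one endpoint of the active edge carries a jump, since at the other endpoint the linear boundary data $g=cr$ extends to the smooth harmonic function $c\,r\sin\theta/\sin(\pi/3)$ in the $\pi/3$ wedge; and the lattice spacing in Ferguson's equilateral coordinates is $2/(nN)$, not $1/(nN)$.) For part~(a), even granting an explicit version of~(b), obtaining the constant $0.0004$ demands tracking every constant, including the discrete corner Green's-function constants, to that precision; this remains, as you acknowledge, laborious and at present open.
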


\noindent In the case of (a), we have found differences as large as 0.000383.  As for (b), the ten-digit match seen in \eqref{Ferguson-approx} is consistent with this because $1/(300)^4\doteq1.235\times10^{-10}$.

In practical problems scale invariance and smoothness (so fine details don't matter much) can reduce things to ``manageable numbers'' within the range of computer calculation.

\subsection{Gambler's ruin with more than three players}

The questions above make sense for $k$ players with initial capitals $A_1,A_2,\ldots,A_k$.  The exact calculations of Subsection~\ref{subsec:exact} are (potentially) available.  We have carried them out to give exact results for $k=4$ and $N:=A_1+A_2+A_3+A_4$ as large as 100.  The results for $N=100$ are in the supplementary materials (Section~\ref{suppl}).  By analogy with \eqref{exact P(123)},
\begin{equation*}
P_{i,i,i,N-3i}(1234)=\frac16\bigg(1-\frac{3i}{N}\bigg),\qquad 1\le i<N/3.
\end{equation*}

The scaling conjecture for $k=4$, either in the form
\begin{equation*}
P_{A',B',C',D'}(\sigma)\doteq P_{A,B,C,D}(\sigma)\quad\text{whenever}\quad\frac{A'}{A}=\frac{B'}{B}=\frac{C'}{C}=\frac{D'}{D},
\end{equation*}
or in the equivalent form
\begin{equation*}
P_{nA,nB,nC,nD}(\sigma)\doteq P_{A,B,C,D}(\sigma),\qquad n\ge2,
\end{equation*}
seems to hold. Here $A,B,C,D\ge1$ and $\sigma\in S_4$ are arbitrary.  Table~\ref{scaling-4players} gives a few data points.  These numbers are consistent with those of Marfil and David (2020).  Notice that convergence is slower for four players than for three.

\begin{table}[htb]
\caption{\label{scaling-4players}$P_{A,B,C,D}(\sigma)$ for $(A,B,C,D)=(n,2n,3n,4n)$ ($1\le n\le 10$) and four choices of $\sigma\in S_4$, rounded to nine decimal places, in support of the scaling conjecture.  Notice that the rate of convergence appears to be slower than for the three-player data in Table~\ref{scaling}.}
\tabcolsep=2mm 
\catcode`@=\active \def@{\hphantom{0}}
\begin{small}
\begin{center}
\begin{tabular}{lcccc}
\hline
\noalign{\smallskip}
$A,B,C,D$ & $\sigma=1234$ & $\sigma=2143$ & $\sigma=3412$ & $\sigma=4321$ \\
\noalign{\smallskip}\hline
\noalign{\smallskip}
$1,2,3,4$    & 0.147755766 & 0.055231830 & 0.012087939 & 0.007499579 \\
$2,4,6,8$    & 0.148462055 & 0.054618468 & 0.012147611 & 0.007459339 \\
$3,6,9,12$   & 0.148582024 & 0.054511807 & 0.012158593 & 0.007452294 \\
$4,8,12,16$  & 0.148621208 & 0.054476628 & 0.012162415 & 0.007449874 \\
$5,10,15,20$ & 0.148638685 & 0.054460859 & 0.012164179 & 0.007448762 \\
$6,12,18,24$ & 0.148647981 & 0.054452450 & 0.012165136 & 0.007448161 \\
$7,14,21,28$ & 0.148653514 & 0.054447436 & 0.012165712 & 0.007447800 \\
$8,16,24,32$ & 0.148657074 & 0.054444206 & 0.012166086 & 0.007447565 \\
$9,18,27,36$ & 0.148659501 & 0.054442002 & 0.012166342 & 0.007447405 \\
$10,20,30,40$& 0.148661229 & 0.054440432 & 0.012166526 & 0.007447290 \\
\noalign{\smallskip}\hline
\noalign{\smallskip}
\end{tabular}
\end{center}
\end{small}
\end{table}

The ICM formula is available for all $k$.  Preliminary investigations (including Table~\ref{wsop-interp4}) suggest it is just as unreliable as an approximation to $P_{A_1,\ldots,A_k}(\sigma)$ as it is when $k=3$.  We have tried interpolation (Subsection~\ref{subsec:interpolation4}) but not yet regression.  Plots such as Figure~\ref{RatioPlot} are not feasible when $k=4$.

One final point:  The $\text{constant}/N^3$ results described above for $k=3$ should not stir false hope of similar results for $k=4$.  There are reasons to expect that
\begin{equation*}
P_{1,1,1,N-3}(4321)\sim\frac{\text{constant}}{N^\kappa}
\end{equation*}
with $\kappa$ an irrational number.  This (heuristically) follows from the connection between gambler's ruin and the ``cops and robbers'' problem.  See Ratzkin and Treibergs (2009).  Table~\ref{4-player-data} gives ten data points, which suggest $\kappa=5.72\cdots$.

\begin{table}[H]
\caption{\label{4-player-data}$P_{1,1,1,N-3}(4321)$ for $N=10,20,\ldots,100$.}
\tabcolsep=2mm 
\catcode`@=\active \def@{\hphantom{0}}
\begin{center}
\begin{tabular}{rlcrl}
\hline
\noalign{\smallskip}
$N$ & $P_{1,1,1,N-3}(4321)$ &@@& $N$ & $P_{1,1,1,N-3}(4321)$ \\
\noalign{\smallskip}\hline
\noalign{\smallskip}
10 & $2.61956573\times10^{-4}$ && @60 & $9.43556904\times10^{-9}$ \\
20 & $5.03729359\times10^{-6}$ && @70 & $3.90711745\times10^{-9}$ \\
30 & $4.96691782\times10^{-7}$ && @80 & $1.82032195\times10^{-9}$ \\
40 & $9.58966829\times10^{-8}$ && @90 & $9.28008330\times10^{-10}$ \\
50 & $2.67684672\times10^{-8}$ && 100& $5.07937120\times10^{-10}$ \\
\noalign{\smallskip}\hline
\noalign{\smallskip}
\end{tabular}
\end{center}
\end{table}

\subsection{Linear interpolation for four players}\label{subsec:interpolation4}

Just as we could interpolate three-player elimination order probabilities with arbitrary $N$ from three known such probabilities with $N=300$, we can also interpolate four-player elimination order probabilities with arbitrary $N$ from four known such probabilities with $N=100$.

Given positive integers $A$, $B$, $C$, and $D$, let $N:=A+B+C+D$ and
\begin{equation*}
A_0:=A\,\frac{100}{N},\quad B_0:=B\,\frac{100}{N},\quad C_0:=C\,\frac{100}{N},\quad D_0:=D\,\frac{100}{N}.
\end{equation*}
Typically, these are not integers.  Therefore, consider the eight points
\begin{align*}
\bm v_{000}&:=(\lfloor A_0\rfloor,\lfloor B_0\rfloor,\lfloor C_0\rfloor, 100-\lfloor A_0\rfloor-\lfloor B_0\rfloor-\lfloor C_0\rfloor),\\
\bm v_{001}&:=(\lfloor A_0\rfloor,\lfloor B_0\rfloor,\lceil C_0\rceil,100-\lfloor A_0\rfloor-\lfloor B_0\rfloor-\lceil C_0\rceil),\\
\bm v_{010}&:=(\lfloor A_0\rfloor,\lceil B_0\rceil,\lfloor C_0\rfloor,100-\lfloor A_0\rfloor-\lceil B_0\rceil-\lfloor C_0\rfloor),\\
\bm v_{100}&:=(\lceil A_0\rceil,\lfloor B_0\rfloor,\lfloor C_0\rfloor,100-\lceil A_0\rceil-\lfloor B_0\rfloor-\lfloor C_0\rfloor),\\
\bm v_{011}&:=(\lfloor A_0\rfloor,\lceil B_0\rceil,\lceil C_0\rceil,100-\lfloor A_0\rfloor-\lceil B_0\rceil-\lceil C_0\rceil),\\
\bm v_{101}&:=(\lceil A_0\rceil,\lfloor B_0\rfloor,\lceil C_0\rceil,100-\lceil A_0\rceil-\lfloor B_0\rfloor-\lceil C_0\rceil),\\
\bm v_{110}&:=(\lceil A_0\rceil,\lceil B_0\rceil,\lfloor C_0\rfloor,100-\lceil A_0\rceil-\lceil B_0\rceil-\lfloor C_0\rfloor),\\
\bm v_{111}&:=(\lceil A_0\rceil,\lceil B_0\rceil,\lceil C_0\rceil,100-\lceil A_0\rceil-\lceil B_0\rceil-\lceil C_0\rceil),
\end{align*}
and choose four of them for the purpose of linear interpolation, discarding any whose fourth coordinate is neither $\lfloor D_0\rfloor$ nor $\lceil D_0\rceil$.  Denote by $\{a\}:=a-\lfloor a\rfloor$ the fractional part of $a$.

If $\{A_0\}+\{B_0\}+\{C_0\}\in(0,1)$, then we choose $\bm v_{000}$, $\bm v_{001}$, $\bm v_{010}$, and $\bm v_{100}$.

If $\{A_0\}+\{B_0\}+\{C_0\}\in(2,3)$, then we choose $\bm v_{011}$, $\bm v_{101}$, $\bm v_{110}$, and $\bm v_{111}$.

If $\{A_0\}+\{B_0\}+\{C_0\}\in(1,2)$, then we choose four of the six points $\bm v_{001}$, $\bm v_{010}$, $\bm v_{100}$, $\bm v_{011}$, $\bm v_{101}$, and $\bm v_{110}$ in such a way that the resulting tetrahedron contains $(A_0,B_0,C_0,D_0)$ in its interior.  The choice is not unique.

Let us call these four points $(A_i,B_i,C_i,D_i)$ ($i=1,2,3,4$).  We can estimate $P_{A,B,C,D}(\sigma)$ by linear interpolation from the four values of $P_{A_i,B_i,C_i,D_i}(\sigma)$ ($i=1,2,3,4$).  As before, we represent $(A_0,B_0,C_0,D_0)$ in barycentric coordinates.  The relevant weights are
\begin{equation*}
\setlength{\arraycolsep}{2mm}
\begin{pmatrix}\lambda_1\\ \lambda_2\\ \lambda_3\end{pmatrix}:=\begin{pmatrix}A_1-A_4&A_2-A_4&A_3-A_4\\B_1-B_4&B_2-B_4&B_3-B_4\\
C_1-C_4&C_2-C_4&C_3-C_4\end{pmatrix}^{-1}\begin{pmatrix}A_0-A_4\\ B_0-B_4\\ C_0-C_4\end{pmatrix}
\end{equation*}
and $\lambda_4:=1-\lambda_1-\lambda_2-\lambda_3$, so that
\begin{equation*}
(A_0,B_0,C_0,D_0)=\sum_{i=1}^4\lambda_i(A_i,B_i,C_i,D_i),
\end{equation*}
and our interpolation estimate is then
\begin{equation*}
\bar P_{A,B,C,D}(\sigma):=\sum_{i=1}^4\lambda_i P_{A_i,B_i,C_i,D_i}(\sigma).
\end{equation*}
If one or more of the weights $\lambda_i$ is negative, that indicates $(A_0,B_0,C_0,D_0)$ lies outside the resulting tetrahedron, and we must choose the four points differently.

\begin{example}
At the final table of the 2019 World Series of Poker Millionaire Maker Event, at the time the fifth-place finisher was eliminated, the remaining four players had chip counts (in units of $100{,}000$, 1/16 of the big blind) equal to $A=97$, $B=125$, $C=144$, and $D=1839$ (WSOP, 2019b).  See Table~\ref{2019wsop2}.  Thus, $N=2205$ and $A$, $B$, $C$, and $D$, multiplied by $100/N$, are $A_0\doteq4.40$, $B_0\doteq5.67$, $C_0\doteq6.53$, and $D_0\doteq83.40$.  Since $\{A_0\}+\{B_0\}+\{C_0\}\doteq1.60$, we must choose four of the six vertices $\bm v_2=(4,5,7,84)$, $\bm v_3=(4,6,6,84)$, $\bm v_4=(5,5,6,84)$, $\bm v_5=(4,6,7,83)$, $\bm v_6=(5,5,7,83)$, and $\bm v_7=(5,6,6,83)$.  We choose $\bm v_2$, $\bm v_3$, $\bm v_5$, and $\bm v_7$, the four points closest to $(A_0,B_0,C_0,D_0)$.  We find that
\begin{equation*}
\lambda_1=\frac{146}{441},\quad\lambda_2=\frac{31}{441},\quad\lambda_3=\frac{88}{441},\quad\lambda_4=\frac{176}{441}, 
\end{equation*}
and results are shown in Table~\ref{wsop-interp4}.  For the record, the actual elimination order turned out to be $\sigma=1243$, the seventh most likely result.  

\begin{table}[H]
\caption{\label{2019wsop2}The final four in the 2019 World Series of Poker Millionaire Maker Event.  The big blind was $1{,}600{,}000$.}
\catcode`@=\active \def@{\hphantom{0}}
\tabcolsep=1.5mm 
\begin{center}
\begin{tabular}{lcccr}
\hline\noalign{\smallskip}
@@@@player & chip count & big blinds & big blinds & actual payoff \\
           &            & (rounded)  &  ${}\times16$ (exact) &             \\
\noalign{\smallskip}\hline
\noalign{\smallskip}
Vincas Tamasauskas & @@$9{,}700{,}000$ & @@6 & @@97    & $\$464{,}375$ \\
Lokesh Garg        & @$12{,}500{,}000$ & @@8 & @125   & $\$619{,}017$ \\
John Gorsuch       & @$14{,}400{,}000$ & @@9 & @144   & $\$1{,}344{,}930$ \\
Kazuki Ikeuchi     & $183{,}900{,}000$ & 115 & 1839  & $\$830{,}783$ \\
\noalign{\smallskip}\hline
\noalign{\smallskip}
@@@@totals          & $220{,}500{,}000$ & 138 & 2205 &\\
\noalign{\smallskip}\hline
\end{tabular}
\end{center}
\end{table}

\begin{table}[htb]
\caption{\label{wsop-interp4}For $(A,B,C,D)=(97,125,144,1839)$, row (a) gives $P_{A,B,C,D}^{\text{ICM}}(\sigma)$, and row (b) gives
the interpolated approximations $\bar P_{A,B,C,D}^{\text{GR}}(\sigma)$.  Here $\eps=10^{-5}$.}
\tabcolsep=1mm 
\catcode`@=\active \def@{\hphantom{0}}
\begin{small}
\begin{center}
\begin{tabular}{llllllll}
\hline
\noalign{\smallskip}
& $\sigma=1234$ & $\sigma=1243$ & $\sigma=1324$ & $\sigma=1342$ & $\sigma=1423$ & $\sigma=1432$ \\
& $\sigma=2134$ & $\sigma=2143$ & $\sigma=2314$ & $\sigma=2341$ & $\sigma=2413$ & $\sigma=2431$ \\
& $\sigma=3124$ & $\sigma=3142$ & $\sigma=3214$ & $\sigma=3241$ & $\sigma=3412$ & $\sigma=3421$ \\
& $\sigma=4123$ & $\sigma=4132$ & $\sigma=4213$ & $\sigma=4231$ & $\sigma=4312$ & $\sigma=4321$ \\
\noalign{\smallskip}\hline
\noalign{\smallskip}
(a) & 0.184762 & 0.0328106 & 0.170195 & 0.0299478 & 0.00376238 & 0.00372801 \\
& 0.143375 & 0.0254611 & 0.118324 & 0.0205440 & 0.00287798 & 0.00281381 \\
& 0.114645 & 0.0201732 & 0.102712 & 0.0178333 & 0.00245171 & 0.00241914 \\
& 0.000198451 & 0.000196638 & 0.000195621 & 0.00019126 & 0.000191977 & 0.000189427 \\
\noalign{\smallskip}\hline
\noalign{\smallskip}
(b) & 0.193685 & 0.0365869 & 0.177240 & 0.0338414 & 0.00127429 & 0.00127379 \\
& 0.139180 & 0.0264827 & 0.118035 & 0.0228907 & 0.000947236 & 0.000946428 \\
& 0.107017 & 0.0207477 & 0.0988571 & 0.0193277 & 0.000811526 & 0.000811141 \\
& 0.751143$\,\eps$ & 0.750991$\,\eps$ & 0.750703$\,\eps$ & 0.750331$\,\eps$ & 0.750250$\,\eps$ & 0.750030$\,\eps$ \\
\noalign{\smallskip}\hline
\noalign{\smallskip}
\end{tabular}
\end{center}
\end{small}
\end{table}
\end{example}

\section{Summary}\label{sec:summary}

In Summary, we have discussed six different methods of approximating the gambler's ruin probabilities:
\begin{enumerate}
\item Exact computation by Markov chain methods (Subsection~\ref{subsec:exact})
\item Arbitrarily precise computation by Jacobi iteration (Subsection~\ref{subsec:iteration})
\item Linear interpolation from exact probabilities (Subsection~\ref{subsec:interpolation})
\item Monte Carlo methods (Subsection~\ref{subsec:Monte})
\item Regression on ICM (Section~\ref{sec:regression})
\item Approximation by Brownian motion (Subsection~\ref{subsec:scaling})
\end{enumerate}

While exact computation using Markov chain methods and arbitrarily precise computation using Jacobi iteration are feasible for $N:=A+B+C$ not too large, it seems difficult for $N$ of practical interest. Linear interpolation is our preferred method, using the nearly exact results for $N=300$. Monte Carlo allows computation for a single $A,B,C$ of interest and is useful for two- or three-digit accuracy.  Regression analysis is quite accurate but probably needs an app to to be ``real-time useful.'' Brownian approximation changes the problem into one that requires special function calculations and so probably also needs an app.  Finally, the widely used ICM is roughly useful (say for single-digit accuracy), and it can be ``done in your head.''

\section{Supplementary materials}\label{suppl}

Supplementary materials include four \textit{Mathematica} programs, four output files, and three regression analyses using \textit{Mathematica}.  Here are the details.

\begin{raggedright}
\begin{enumerate}
\item \textit{Mathematica} program to compute three-player elimination order probabilities in double precision by Markov chain methods ($N$ arbitrary), and output when $N=200$.  \url{http://www.math.utah.edu/~ethier/3ruin-program.nb}
\url{http://www.math.utah.edu/~ethier/3ruin200-output}
\item \textit{Mathematica} program to compute three-player elimination order probabilities in double precision by iteration ($N$ arbitrary), and output when $N=300$.  \url{http://www.math.utah.edu/~ethier/3iteration-program.nb}
\url{http://www.math.utah.edu/~ethier/3iteration300-output}
\item \textit{Mathematica} program to compute four-player elimination order probabilities in single precision by Markov chain methods ($N$ arbitrary), and output when $N=50$.   \url{http://www.math.utah.edu/~ethier/4ruin-program.nb}
\url{http://www.math.utah.edu/~ethier/4ruin50-output}
\item \textit{Mathematica} program to compute four-player elimination order probabilities in single precision by iteration ($N$ arbitrary), and output when $N=100$.  \url{http://www.math.utah.edu/~ethier/4iteration-program.nb}
\url{http://www.math.utah.edu/~ethier/4iteration100-output}
\item \textit{Mathematica} files containing regression analyses for $\sigma=321$, 312, and 213.   
\url{http://www.math.utah.edu/~ethier/regression321-300.nb}
\url{http://www.math.utah.edu/~ethier/regression312-300.nb}
\url{http://www.math.utah.edu/~ethier/regression213-300.nb}
\end{enumerate}
\end{raggedright}

\begin{newreferences}

\item Aguilar, J. (2016) How to negotiate final table deals like a pro.  Upswing Poker. \url{https://upswingpoker.com/final-table-deal-making-tournaments/}.

\item Aldous, D., Lanoue, D. and Salez, J. (2015) The compulsive gambler process.  \textit{Electron. J. Probab.} \textbf{20}, 1--18.

\item Bachelier, L. (1912) \textit{Calcul des Probabilit\'es, Vol.~1}.  Gauthier-Villars, Paris.

\item Bruss, F. T., Louchard, G., and Turner, J. W. (2003) On the $N$-tower problem and related problems. \textit{Adv.\ Appl.\ Probab.} \textbf{35} (1), 278--294.

\item Cover, T. M. (1987) Gambler’s ruin: A random walk on the simplex. In \textit{Open Problems in Communication and Computation.} (T. M. Cover and B. Gopinath, eds.) 155. Springer, New York.

\item David, G. (2015) Markov chain solution to the 3-tower problem. In \textit{Information and Communication Technology.} (I. Khalil, E. Neuhold, A. Tjoa, L. Xu, and I. You, eds.)  ICT-EurAsia 2015. Lecture Notes in Computer Science \textbf{9357}, 121--128. Springer, Cham. 

\item Davis, T. A. (2006).  \textit{Direct Methods for Sparse Linear Systems}.  SIAM, Philadelphia.

\item Denisov, D. and Wachtel, V. (2015) Random walks in cones.  \textit{Ann.\ Probab.} \textbf{43} (3), 992--1044.

\item Diaconis, P. (1988) \textit{Group Representations in Probability and Statistics}.  Lecture Notes--Monograph Series \textbf{11}.  Institute of Mathematical Statistics, Hayward, CA.

\item Diaconis, P. and Freedman, D. (1979) On rounding percentages.  \textit{J. Amer. Statist. Assoc.} \textbf{74} (366a), 359--364.

\item Diaconis, P., Houston-Edwards, K., and Saloff-Coste, L. (2021) Gambler's ruin estimates on finite inner uniform domains.  \textit{Ann.\ Appl.\ Probab.}, \textbf{31} (2), 865--895.

\item Engel, A. (1993) The computer solves the three tower problem. \textit{Amer.\ Math.\ Monthly} \textbf{100} (1), 62--64.

\item Ethier, S. N. (2010) \textit{The Doctrine of Chances: Probabilistic Aspects of Gambling}.   Springer, Berlin and Heidelberg.

\item Feller, W. (1968) \textit{An Introduction to Probability Theory and Its Applications, Volume I}, Third Edition.  John Wiley \& Sons, Inc., New York.

\item Ferguson, T. (1995) Gambler's ruin in three dimensions.  Unpublished.  \url{https://www.math.ucla.edu/~tom/papers/unpublished/gamblersruin.pdf}.

\item Ganzfried, S. and Sandholm, T. (2008) Computing an approximate jam/fold equilibrium for 3-player
no-limit Texas hold’em tournaments.  \textit{AAMAS08: 7th International Conference on Autonomous Agents and Multi Agent Systems} (L. Padgham, D. Parkes, J. M\"{u}ller, and S. Parsons, eds.) 919--926. International Foundation for Autonomous Agents and Multiagent Systems, Richland, SC.

\item Gilbert, G. T. (2009) The independent chip model and risk aversion.  \url{https://arxiv.org/abs/0911.3100}.

\item Gilliland, D., Levental, S., and Xiao, Y. (2007) A note on absorption probabilities in one-dimensional random walk via complex-valued martingales. \textit{Statist.\ Probab.\ Lett.} \textbf{77} (11), 1098--1105.

\item Hajek, B. (1987) Gambler’s ruin: A random walk on the simplex.  In \textit{Open Problems in Communication and Computation.} (T. M. Cover and B. Gopinath, eds.) 204--207.  Springer, New York.

\item Harville, D. A. (1973) Assigning probabilities to the outcomes of multi-entry competitions.  \textit{J. Amer.\ Statist.\ Assoc.} \textbf{68} (342), 312--316.

\item ICMizer (2020) Poker ICM calculator for final table deals. \url{https://www.icmpoker.com/icmcalculator/}.

\item Kemeny, J. G. and Snell, J. L. (1976) \textit{Finite Markov Chains}. Springer-Verlag, New York.

\item Kim, M. S. (2005) Gambler's ruin in many dimensions and optimal strategy in repeated multi-player games with application to poker.  Masters thesis, UCLA.

\item Luce, R. D. (1959) \textit{Individual Choice Behavior: A Theoretical Analysis}.  Wiley, New York.

\item Luce, R. D. (1977) The choice axiom after twenty years. \textit{J. Math.\ Psych.} \textbf{15} (3), 215--233.

\item Malmuth, M. (1987) \textit{Gambling Theory and Other Topics}. Two Plus Two Publishing, Henderson, NV.

\item Malmuth, M. (2004) \textit{Gambling Theory and Other Topics}, Sixth Edition. Two Plus Two Publishing, Henderson, NV.  

\item Marfil, R. I. D. and David, G. (2020) On the placing probabilities for the four-tower problem using recursions based on multigraphs.  \textit{J. Math. Soc. Philippines} \textbf{43} (1), 19--32.

\item Plackett, R. L. (1975) The analysis of permutations.  \textit{J. R. Statist.\ Soc., Ser. C (Appl.\ Statist.)} \textbf{24} (2), 193--202.

\item Ratzkin, J., and Treibergs, A. (2009) A capture problem in Brownian motion and eigenvalues of spherical domains.  \textit{Trans.\ Amer.\ Math.\ Soc.} \textbf{361} (1), 391--405.

\item Ross, S. M. (2009) A simple solution to a multiple player gambler's ruin problem.  \textit{Amer.\ Math.\ Monthly} \textbf{116} (1), 77--81.

\item Song, S. and Song, J. (2013) A note on the history of the gambler's ruin problem.  \textit{Comm.\ Statist.\ Applic.\ Methods} \textbf{20} (1), 1--12.

\item Stern, H. S. (2008) Estimating the probabilities of the outcomes of a horse race (alternatives to the Harville formulas).  In \textit{Efficiency of Racetrack Betting Markets}, 2008 Edition (D. B. Hausch, V. S. Y. Lo, and W. T. Ziemba, eds.) 225--235.  World Scientific Publishing, Singapore.

\item Stirzaker, D. (1994) Tower problems and martingales. \textit{Math.\ Scientist} \textbf{19} (1), 52--59.

\item Stirzaker, D. (2006) Three-handed gambler's ruin. \textit{Adv.\ Appl.\ Probab.} \textbf{38} (1), 284--286.

\item Swan, Y. C. and Bruss, F. T. (2006) A matrix-analytic approach to the $N$-player ruin problem.  \textit{J. Appl. Probab.} \textbf{43} (3), 755--766.

\item Turner, H., van Etten, J., Firth, D., and Kosmidis, I. (2017) Introduction to PlackettLuce.  Microsoft R Application Network.  \url{https://mran.microsoft.com/snapshot/2017-12-15/web/packages/PlackettLuce/vignettes/Overview.html}

\item WSOP (2019a) Hossein Ensan wins the 2019 WSOP Main Event (\$10,000,000).  \url{https://www.wsop.com/tournaments/updates/?aid=2&grid=1622&tid=17298&dayof=7661&rr=5&curpage=4}.

\item WSOP (2019b) John Gorsuch completes epic comeback to win 2019 WSOP Millionaire Maker for \$1,344,930.  \url{https://www.wsop.com/tournaments/updates/?aid=2&grid=1622&tid=17287&dayof=7470&rr=5}.

\end{newreferences}

\end{document}